\NewDocumentCommand{\dgal}{sO{}m}{%
  \IfBooleanTF{#1}
    {\dgalext{#3}}
    {\dgalx[#2]{#3}}%
}
\NewDocumentCommand{\dgalext}{m}{%
  \sbox0{%
    \mathsurround=0pt 
    $\left\{\vphantom{#1}\right.\kern-\nulldelimiterspace$%
  }%
  \sbox2{\{}%
  \ifdim\ht0=\ht2
    \{\kern-.45\wd2 \{#1\}\kern-.45\wd2 \}%
  \else
    \left\{\kern-.5\wd0\left\{#1\right\}\kern-.5\wd0\right\}%
  \fi
}
\NewDocumentCommand{\dgalx}{om}{%
  \sbox0{\mathsurround=0pt$#1\{$}%
  \sbox2{\{}%
  \ifdim\ht0=\ht2
    \{\kern-.45\wd2 \{#2\}\kern-.45\wd2 \}%
  \else
    \mathopen{#1\{\kern-.5\wd0 #1\{}
    #2
    \mathclose{#1\}\kern-.5\wd0 #1\}}
  \fi
}
\DeclareSymbolFont{fouriersymbols}{FMS}{futm}{m}{n}
\DeclareSymbolFont{fourierlargesymbols}{FMX}{futm}{m}{n}
\DeclareMathDelimiter{\VERT}{\mathord}{fouriersymbols}{152}{fourierlargesymbols}{147}
\crefname{hypothesis}{Hypothesis}{Hypotheses}
\title{Unfitted Nitsche's method for computing band structures in phononic crystals with impurities}
\author{Hailong Guo\thanks{School of Mathematics and Statistics,  The University of Melbourne,  Parkville, VIC 3010, Australia   (hailong.guo@unimelb.edu.au).}
\and %
Xu Yang\thanks{Department of Mathematics, University of California, Santa Barbara, CA, 93106, USA (xuyang@math.ucsb.edu).}
\and%
Yi Zhu\thanks{Yau Mathematical Sciences Center and Department of Mathematical Sciences, Tsinghua University, Beijing, 100084, People's Republic of China (yizhu@mail.tsinghua.edu.cn).}
}
\begin{document}

\maketitle

\begin{abstract}
In this paper, we propose an unfitted Nitsche's method to compute the band structures of phononic crystal with impurities of general geometry. The proposed method does not require the background mesh to fit the interfaces of impurities, and thus avoids the expensive cost of generating body-fitted meshes and simplifies the inclusion of interface conditions in the formulation. The quasi-periodic boundary conditions are handled by the Floquet-Bloch transform, which converts the computation of band structures into an eigenvalue problem with periodic boundary conditions. More importantly, we show the well-posedness of the proposed method using a delicate argument based on the trace inequality, and further prove the convergence by the Babu\v{s}ka-Osborn theory. We achieve the optimal convergence rate at the presence of the impurities of general geometry. We confirm the theoretical results by two numerical examples, and show the capability of the proposed methods for computing the band structures without fitting the interfaces of impurities.


\end{abstract}

\begin{keywords}
  Band structure, phononic crystal, unfitted mesh, high-contrast, impurities
\end{keywords}

\begin{AMS}
78M10, 78A48, 47A70, 35P99
\end{AMS}

\section{Introduction}
Phononic crystals are synthetic materials with periodic structure. Similar to photonic crystals, they present band-gap structures related to topological properties, which prevent elastic waves propagating in certain frequencies. This leads to a series of important applications such as ultrasound imaging and wireless communications.  In the past several years, the blooming of topological phenomena in phononic materials is taking the investigation on phonoic crystals to a new height. One of the key problems is to obtain the band structure of bulk phononic crystals. In literature, Economou and Sigalas \cite{EcSi1994} experimentally observed the band-gap in phononic crystals.  Ammari {\it et al.} \cite{AmKL2009} mathematically proved the existence of band-gap in the high-contrast phononic crystal using the asymptotic expansion and the generalized  Rouch\'e's theorem.


In general, phononic crystals with large band-gap is preferred due to the wide range of applications. One of the most influential accounts of band-gap optimization comes from  Sigmund and  Jensen who were the first researchers to  use topology optimization approach to design a phononic crystal with maximum relative band-gap size \cite{SiJe2003}. The main idea is to find the optimal arrangement of two different materials to achieve maximum band-gap. The geometric configuration of the two materials is continually updated during designing process. The main computational challenge is the numerical solution of heterogeneous eigenvalue problems with the moving material interface.

In recent years,  there is increasing interest in investigating wave propagation in phononic materials. Numerical computation of band structures plays an essential role since wave dynamics is completely determined by the band structure of the material. Early works can be traced back to \cite{KHDD1993} where Kushwaha {\it et al.} used the plane-wave expansion to compute the band structure. The transfer matrix method was also adopted by Sigalas and  Soukoulis \cite{SiSo1995} to simulate the propagation of elastic waves through disordered solid. To date, various methods have been developed to compute the band structure of phononic crystals including the multiple scattering method\cite{KaEc1999}, the finite difference time domain method \cite{CaHL2004},   the meshless method\cite{ZZWS2016},   the (multiscale) finite element method\cite{CaRR2016, HuOs2020, LHWL2017, VeBM2013},  the homogenization method \cite{CoMa2020, ALZ2019, AFKRYZ2018},  and the singular boundary method \cite{LiCh2019}.

Among the aforementioned methods, the numerical difficulties come from two different perspectives: one is the heterogeneous nature of the phononic crystals and the other is how to efficiently impose the quasi-periodic boundary condition. However, almost all the existing methods use either indirect numerical methods such as asymptotical expansion or direct discretization using body-fitted meshes which did not work well for both numerical challenges. The mathematical analysis of finite element methods using body-fitted meshes for elliptic interface problems can be found in \cite{ChZo1998,Xu1982}.  Recently, Wang {\it et al.} \cite{WZLS2019} proposed a Petrov-Galerkin immersed finite element method to compute the band structure of the phononic crystal and imposed the quasi-periodic boundary condition directly.  However, to the best of our knowledge,  there are no numerical methods in the literature whose performance was mathematically justified.

In this paper,  we propose an unfitted Nitsche's method to compute the band structures of phononic crystal with impurities of general geometry, and prove the convergence with rigorous mathematical analysis. The heterogeneous property of the phononic crystal is modeled by the interface condition which we rewrite into a variational framework with the help of the Floquet-Bloch theory. To handle the  quasi-periodic boundary condition, the Floquet-Bloch transform is applied which reformulates the model equation with quasi-periodic boundary conditions into an equivalent model equation with periodic boundary conditions and Bloch-type interface condition.
Then, the reformulated model equations can be numerically solved by the unfitted Nitsche's type method \cite{HaHa2002, HaHa2004, HaLL2017, BCHLM2015, GuYa2018} using uniform meshes.  The proposed unfitted finite element method is motivated by our previous work of computing edge models in topological materials \cite{GuYZ2019}.
The first advantage is that it uses meshes independent of the location of the material interfaces.  It reduces the computational cost of generating body-fitted meshes, especially in designing phononic crystals.
The second advantage is that it is straightforward to impose the periodic boundary conditions since only uniform meshes are used.  Remark that imposing periodic boundary conditions on general unstructured meshes is quite technically involved, and interesting readers are referred to  \cite{VGGZ2019, AdTr2020} and the references therein about the recent development of imposing periodic boundary condition on general unstructured meshes.

As mentioned in our previous work \cite{GuYZ2019}, the discrete Nitsche's bilinear form involves the solution itself in addition to its gradient which cause the difficulties in the analysis. In this paper, we establish a solid theoretical analysis for the proposed unfitted finite element methods by conquering the above difficulties. Specifically, we show the discrete equation is well defined by using a delicate trace inequality on the cut element,  the Poincar\'e inequality between the energy norm of the original model equation and the energy norm of the modified model equation, and the explicit relation between the strain tensor and stress tensor.  By the aid of the Babu\`ska-Osborne spectral approximation theory\cite{BaOs1989, BaOs1991}, the proposed unfitted finite element method is proven to have the optimal approximation property for the eigenvalues and eigenfunctions in the high-contrast heterogeneous setting.

The paper is organized as follows. In \cref{sec:phononic}, we introduce the model of plane-wave propagation in the phononic crystals. In \cref{sec:unfitted}, we propose the unfitted numerical method to compute the band structure of phononic crystal based on the Bloch-Floquet theory and prove the proposed method admits a unique solution. In \cref{sec:error}, we carry out the optimal error analysis. In \cref{sec:example}, we present some numerical examples in a realistic setting to verify and validate our theoretical discoveries. At the end, some conclusion is draw in \cref{sec:conclusions}.

\section{Model of phononic crystal }
\label{sec:phononic}

In this section, we first present a litter digest to the two-dimensional  phononic crystal. Then we consider the model of in-plane wave propagation.

\subsection{Problem setup}
\label{ssec:setup}
Phononic crystal is designed from periodically arrangement of two different materials to achieve extraordinary properties like negative refractive index.
 The body of phononic crystal is  a kind of heterogeneous high-contrast materials.

We will mainly focus phononic crystals  with  two-dimensional Bravais lattice $\Lambda$   formed by two  primitive vectors $\bm{a}_1$ and
$\bm{a}_2$, i.e.
\begin{equation}\label{equ:lattice}
	\Lambda = \mathbb{Z}\bm{a}_1 + \mathbb{Z}\bm{a_2}
	= \left\{ m_1\bm{a}_1 + m_2\bm{a}_2:~m_1,m_2\in \mathbb{Z} \right\}.
\end{equation}
An example of square lattice  with $\bm{a}_1 = (a, 0)^T$ and $\bm{a}_2 = (0, a)^T$ is shown in \cref{fig:cirble_lattice}.
The fundamental $\Omega$ of Bravais lattice $\Lambda$ is defined as
\begin{equation}
	\Omega = \left\{  \theta_1 \bm{a}_1  +  \theta_1 \bm{a}_2: 0 \le \theta_1, \theta_2 \le 1 \right\},
\end{equation}
which is illustrated  in \cref{fig:circle_unitcell} for the square lattice.

Denote   the generating basis of  the reciprocal lattice (or dual lattice)
by $\bm{k}_i$ for $i=1,2$, which satisfy
\begin{equation}
	\bm{k}_i \cdot \bm{a}_j = 2\pi \delta_{i,j}, \quad  \forall i, j = 1, 2,
\end{equation}
where $\delta_{ij}$ is the  Kronecker delta.  Then, the reciprocal lattice
$\Lambda^{*}$ is
\begin{equation}\label{equ:duallattice}
	\Lambda^* = \mathbb{Z}\bm{k}_1 + \mathbb{Z}\bm{k_2}
	= \left\{ m_1\bm{k}_1 + m_2\bm{k}_2: ~m_1,m_2\in \mathbb{Z} \right\}.
\end{equation}
The fundament domain of the reciprocal lattice is
\begin{equation}
	\Omega^* = \left\{  \theta_1 \bm{k}_1  +  \theta_2 \mathbf{k}_2: 0 \le \theta_1, \theta_2 \le 1 \right\},
\end{equation}
which is termed as the first Brillouin zone \cite{Kittel2003}.  Again, we illustrate the the first Brillouin zone for the square lattice in \cref{fig:circle_brillouinzone}, where the triangle formed by the point $O$, $X$, and $M$ is referred as  the irreducible Brillouin zone \cite{Kittel2003}.

\begin{figure}[!h]
   \centering
   \subcaptionbox{\label{fig:cirble_lattice}}
  {\includegraphics[width=0.3106\textwidth]{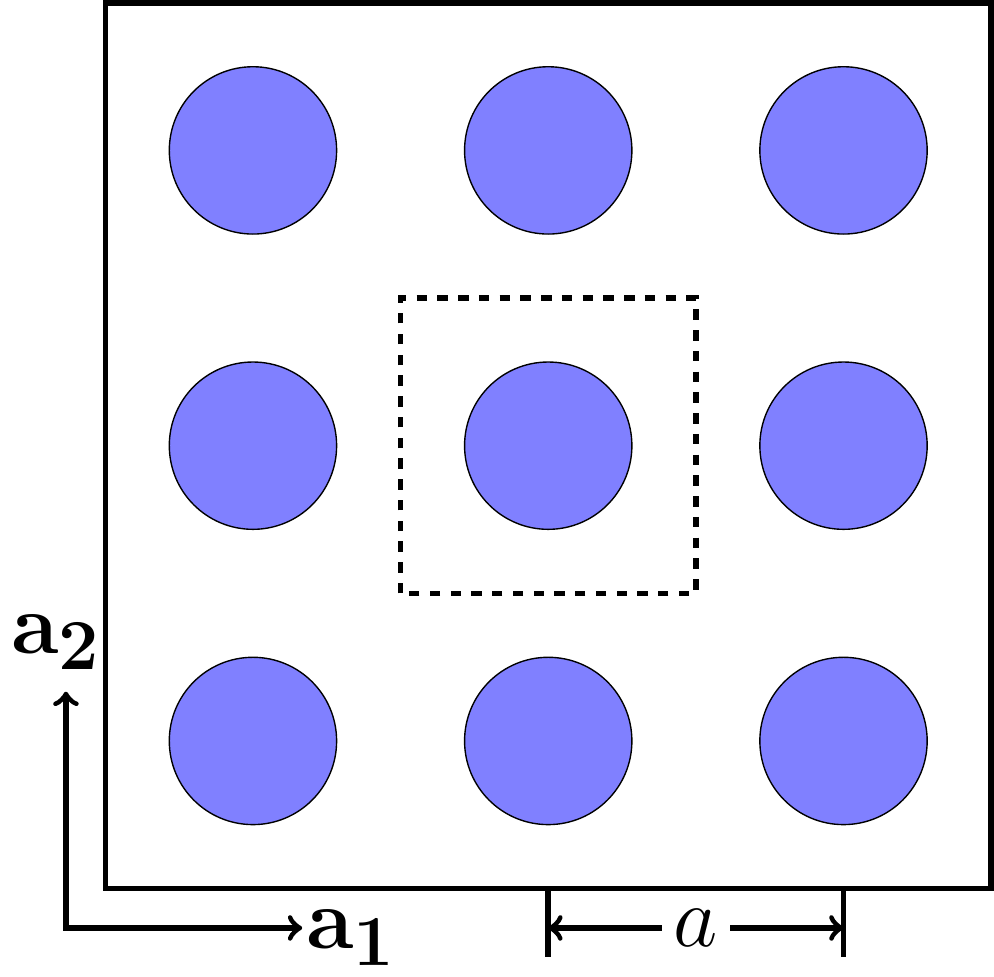}}
  \vspace{0.1in}
  \subcaptionbox{\label{fig:circle_unitcell}}
   {\includegraphics[width=0.275\textwidth]{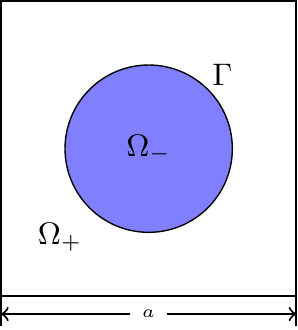}}
     \vspace{0.1in}
  \subcaptionbox{\label{fig:circle_brillouinzone}}
  {\includegraphics[width=0.306\textwidth]{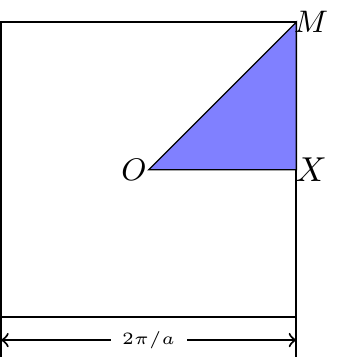}}
   \caption{ Bravais lattice $\Gamma$.  (a): 2D square lattice; (b): the unit cell;
   (c): the First Brillouin Zone}
   \label{fig:circle}
\end{figure}

The fundamental cell $\Omega$ of the phononic  crystal consists of
hard inclusion of one material $\Omega^{-}$ into a background material $\Omega^{+}$.   The background material is referred as the matrix and the inclusion is also referred as fiber.  The matrix $\Omega^+$ and the inclusion $\Omega^{-}$ are separated  by the
material interface $\Gamma$.  In \cref{fig:circle_unitcell},  we show  the fundament cell with a circular inclusion.

In this paper, we assume that both the inclusions and matrix are homogeneous isotropic elastic solids.   We use $\lambda^+$ ( or  $\lambda^-$)   denote the  first  Lam\'e parameter of matrix (or inclusion) and $\mu^+$ (or $\mu^-$) denote the first  Lam\'e parameter of matrix (or inclusion).  Similar, let  $\rho^+$ and
$\rho^-$ denote the mass density of the matrix and inclusion, respectively.  To simplify the notation, we let
\begin{equation}
\lambda =
 \begin{cases}
 	\lambda^-, &\text{in } \Omega^-,\\
 	\lambda^+, &\text{in } \Omega^+,
 \end{cases}
\quad
 \mu =
 \begin{cases}
 	\mu^-,& \text{in } \Omega^-, \\
 	\mu^+,& \text{in } \Omega^+,
 \end{cases}
 \quad
  \text{  and    }\,
   \rho =
 \begin{cases}
 	\rho^-,& \text{in } \Omega^-, \\
 	\rho^+,& \text{in } \Omega^+.
 \end{cases}
\end{equation}
For any vector-valued function  $\bm{v}$ defined on $\Omega$, let
$\llbracket \bm{v} \rrbracket $ be the jump of function $\bm{v}$ crossing the interface $\Gamma$, i.e.
\begin{equation}
  \llbracket \bm{v} \rrbracket(\bm{x})
  = \bm{v}|_{\Omega^+}(\bm{x}) - \bm{v}|_{\Omega^-}(\bm{x})
\end{equation}
for any $\bm{x}\in \Gamma$.

Throughout the paper, the standard notations for Sobolev spaces and their associated norms as in \cite{BrennerScott2008, Ciarlet2002, Evans2008}.
Given a bounded subdomain $D\subset \Omega$ and any positive integer $k$, the Sobolev space with norm
$\|\cdot\|_{k,  D} $ and seminorm $|\cdot|_{k, D}$ is denoted by $H^{k}(D)$. When $k=0$, $H^k(D)$ reduces to the standard $L^2(D)$ space.
 Let $(\cdot, \cdot)_D$ and $\langle \cdot, \cdot\rangle_{\Gamma}$ denote the standard $L_2$ inner products of $L_2(D)$ and
 $L_2(\Gamma)$, respectively. When $D = \Omega$, the subscript is omitted.
For a bounded domain $D= D^+\cup D^-$ with $D^+\cap D^-=\emptyset$, let  $H^{k}(D^+\cup D^-)$  be the function space consisting of piecewise Sobolev  functions $w$  such
 that $w|_{D_1}\in H^{k}(D_1)$ and $w|_{D_2}\in H^{k}(D_2)$, whose  norm  is defined as
 \begin{equation}\label{eq:pnorm}
\|w\|_{k, D^+\cup D^-} = \left( \|w\|_{k, D^+}^p + \|w\|_{k, D^-}^p\right)^{1/p},
\end{equation}
and seminorm is defined as
 \begin{equation}\label{eq:psnorm}
|w|_{k,p, D^+\cup D^-} = \left( |w|_{k, D^+}^p + |w|_{k, D^-}^p\right)^{1/p}.
\end{equation}
To avoid abusing of notation, the same notation is applied to the vector-valued function $\bm{w} = (w_1, w_2)^T$.

For any vectors $\bm{v}$ and $\bm{w}$, let $\bm{v}\otimes\bm{w}$ the tensor product of $\bm{v}$ and $\bm{w}$ and  let $\bm{v}\odot\bm{w} = \frac{1}{2}\left(\bm{v}\otimes\bm{w} + \bm{w}\otimes\bm{v}\right)$ be the symmetric tensor product.  For the quasimomentum $\bm{k}$ in the Brillouin zone, define the shift differential operator $\nabla_{\bm{k}}$ as
\begin{equation}\label{equ:shiftdiff}
	\nabla_{\bm{k}} = \nabla + \mathrm{i}\bm{k},
\end{equation}
where $\mathrm{i}$ is the imaginary unit.

In this paper, we use  the constant $C$ with or without a subscript  to denote  a generic positive constant which can be different at different occurrences. In addition, it is independent of the mesh size and the location of the interface. By $x\lesssim y$, we mean that there exists a constant C such that $x\le Cy$.

Before ending this section, we introduce some additional function spaces for Bloch-periodic (or quasi-periodic) functions
\begin{align}
	&H^k_{per}(\Omega) = \left\{ \bm{w}(\bm{x}) \in H^k(\Omega):
	\bm{w}(\bm{x}\pm\bm{a}_j) = \bm{w}(\bm{x}) \, \text{on } \partial\Omega \text{ and } j = 1, 2\right\}, \\
	&H^k_{\bm{k}}(\Omega) = \left\{ \bm{w}(\bm{x}): \exp(-\mathrm{i}\bm{k}\cdot \bm{x}) \bm{w}(\bm{x})\in H^k_{per}(\Omega)
	\right\}.
\end{align}


\subsection{In-plane wave propagation}
The in-plane wave propagation is  modeled by the elastodynamics operator
\begin{equation}\label{equ:diffoperator}
	\mathcal{L}\bm{\phi} = - \nabla \cdot \bm{\sigma}[\bm{\phi}] =
	-\nabla \cdot \bm{\mathrm{C}}\bm{\epsilon}[\bm{\phi}].
\end{equation}
where  $\bm{\phi} = (\phi_1, \phi_2)^T$ is the displacement vector and $\bm{\mathrm{C}}$ is the  fourth-order stiffness tensor.
In \eqref{equ:diffoperator}, $\bm{\epsilon}$ is the strain tensor  which is related to the displacement via
\begin{equation}\label{equ:straintensor}
	\bm{\epsilon}[\bm{\phi}] = \nabla\odot\bm{\phi},
\end{equation}
and   $\bm{\sigma} $ is the stress tensor. For the homogeneous isotropic material,  the stress tensor and strain tensor are related by the Hook's law, i.e.
\begin{equation}\label{equ:comprelation}
	\bm{\sigma}[\bm{\phi}] = \bm{\mathrm{C}}\bm{\epsilon}[\bm{\phi}] = 2\mu\bm{\epsilon}[\bm{\phi}]
	+ \lambda \mbox{tr}(\bm{\epsilon}[\bm{u}])\mathbb{I}_2.
\end{equation}
where $\mbox{tr}(A)$ is the trace of the matrix $A$  and $\mathbb{I}_2$ is the $2\times 2$ identity matrix.

Let   $\bm{k}\in \Lambda^*$ be the quasi-momentum.  According to the Bloch theory\cite{Kittel2003},  the  in-plane wave propagation  in phononic crystal  can be reformulated to solve the following quasi-periodic eigenvalue problem   \cite{AFKRYZ2018, CoMa2020}: find $(\omega^2, \bm{\phi})\in \mathbb{R} \times H^1_{\bm{k}}(\Omega)$ such that
\begin{equation}\label{equ:blocheigen}
\begin{cases}
 \mathcal{L}\bm{\phi} =  \omega^2 \rho\bm{\phi},
&\text{in } \Omega\setminus\Gamma , \\
\llbracket \bm{\phi} \rrbracket  =
\llbracket \bm{\mathrm{C}}(\nabla\odot\bm{\phi})\bm{n}\rrbracket = 0, &\text{on }  \Gamma,
\end{cases}
\end{equation}
where $\bm{n}$ is the unit normal vector of $\Gamma$ pointing from $\Omega^-$ to $\Omega^+$.

Due to periodicity and symmetry, we only consider the case that  the quasi-momentum  $\bm{k}$  belongs to the irreducible Brillouin zone.
For any fixed $\bm{k}\in \triangle_{OXM}$,  the eigenvalue problems \eqref{equ:blocheigen} admits a sequence of eigenvalues
$0< \omega_{\bm{k},1}^2\le\omega_{\bm{k},2}^2\le\omega_{\bm{k},3}^2\le \cdots \rightarrow \infty $ and corresponding eigenfunctions
$\bm{\phi}_{\bm{k},1}, \bm{\phi}_{\bm{k},2}, \bm{\phi}_{\bm{k},3}, \cdots$ which are orthogonal in a $\rho$-weighted $L^2_{\bm{k}}(\Omega)$ .

The existence of band gap has been mathematically justified by  Ammari et al. \cite{AmKL2009} with the asymptotic analysis in the high contrast regime. We will propose numerical method to compute the band structure in a generic setup.
\section{Unfitted Nitsche's method for computing band structure}
\label{sec:unfitted}
In this section, we are going to propose an unfitted numerical method to efficiently compute the band structure for the phononic crystal.  The numerical challenges brought by the interface eigenvalue \eqref{equ:blocheigen} is twofold: one is quasi-periodic nature of the Bloch wave and the other one is the inhomogeneity of the material. These challenges shall be discussed in the following subsections.

\subsection{Bloch-Floquet theory}
To address  the first numerical challenges, we apply the Bloch-Floquet transform $\bm{\phi}(\bm{x}) = e^{\mathrm{i}\bm{k}\cdot\bm{x}}\bm{u}(\bm{x})$.
 The quasi-periodic eigenvalue  problem can be reformulated as: find $(\omega^2, \bm{u})\in \mathbb{R} \times H^1_{per}(\Omega)$ such that
\begin{equation}\label{equ:neweigen}
	\begin{cases}
 \mathcal{L}_{\bm{k}}\bm{u} =  \omega^2 \rho\bm{u},
&\text{in } \Omega\setminus\Gamma, \\
\llbracket \bm{u} \rrbracket  =
\llbracket \bm{\mathrm{C}}(\nabla_{\bm{k}}\odot\bm{u})\bm{n}\rrbracket = 0, &\text{on }  \Gamma,
\end{cases}
\end{equation}
where the differential operator $\mathcal{L}_{\bm{k}}$ is defined as
\begin{equation}
	\mathcal{L}_{\bm{k}}\bm{u} = \nabla_{\bm{k}}\cdot \bm{\mathrm{C}}(\nabla_{\bm{k}} \odot\bm{u}),
\end{equation}
with $\nabla_{\bm{k}}$ being the shift differential operator defined in \eqref{equ:shiftdiff}.  We want to remark that
\begin{equation}
	\bm{\mathrm{C}}(\nabla_{\bm{k}}\odot\bm{u}) =
	2\mu \nabla_{\bm{k}}\odot\bm{u} + \lambda( \nabla_{\bm{k}}\cdot\bm{u}) \mathbb{I}_2
	\
\end{equation}
is termed as the modified stress tensor.

For a nonzero quasi-momentum $\bm{k}$, it is not difficult to see that $\mathcal{L}_{\bm{k}}$ is a self-adjoint positive define operator. The spectrum of the elastodynamics operator  $\mathcal{L}$ is the union of spectrum of $\mathcal{L}_{\bm{k}}$ for all $\bm{k}\in \Omega^*$.
Notice that the Bloch-Floquet transform $\phi(\bm{x}) = e^{\mathrm{i}\bm{k}\cdot\bm{x}}\bm{u}(\bm{x})$ is an isomorphism from  $H^1_{\bm{k}}(\Omega)$ to $H^1_{per}(\Omega)$. Then, we have the following Poincar\'e inequality:
\begin{equation}\label{equ:poincare}
	C_0[ (\bm{\mathrm{C}}(\nabla\odot\bm{u}), \nabla\odot\bm{u}) + (\bm{\mathrm{C}}(\bm{k}\odot\bm{u}), \bm{k}\odot\bm{u}) ]
	\le (\bm{\mathrm{C}}(\nabla_{\bm{k}}\odot\bm{u}), \nabla_{\bm{k}}\odot\bm{u}),
\end{equation}
where $C_0$ is a  positive constant

\subsection{Formulation of the unfitted Nitsche's method}
To find the band structure of $\mathcal{L}$, it suffices to solve a series of periodic eigenvalue problem \eqref{equ:neweigen}. The main numerical barrier is how to efficiently handle the interface condition.  We alleviate this barrier by introducing a new unfitted Nitsche's method which is seamlessly infusing with the Bloch-Floquet theory.

One merit of unfitted Nitsche's method is to use meshes independent of the location of the material interface. Due to the lattice structure of the phononic crystal, the uniform meshes is adopted. To show the main idea, we use the square lattice as the prototype model but the method works for other lattices.
We generate a  uniform mesh $\mathcal{T}_h$  on the fundamental domain $\Omega$ of the square lattice by partitioning it into $N^2$ subsquares with mesh size $h = \frac{a}{N}$ and then splitting each subsquare into isosceles right triangles, see \cref{fig:wholemesh}.

\begin{figure}[!h]
   \centering
   \subcaptionbox{\label{fig:wholemesh}}
  {\includegraphics[width=0.32\textwidth]{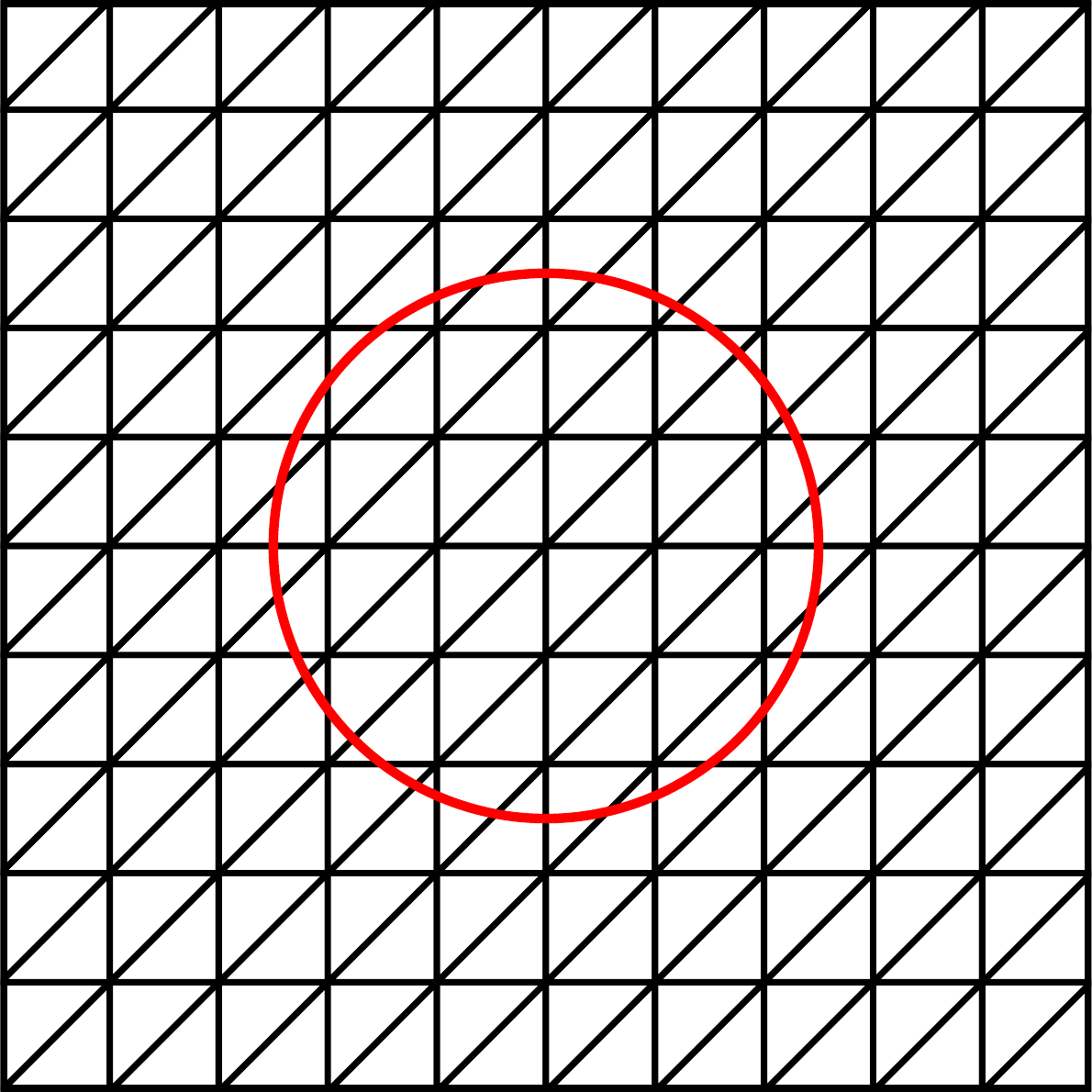}}
  \vspace{0.1in}
  \subcaptionbox{\label{fig:interiormesh}}
   {\includegraphics[width=0.32\textwidth]{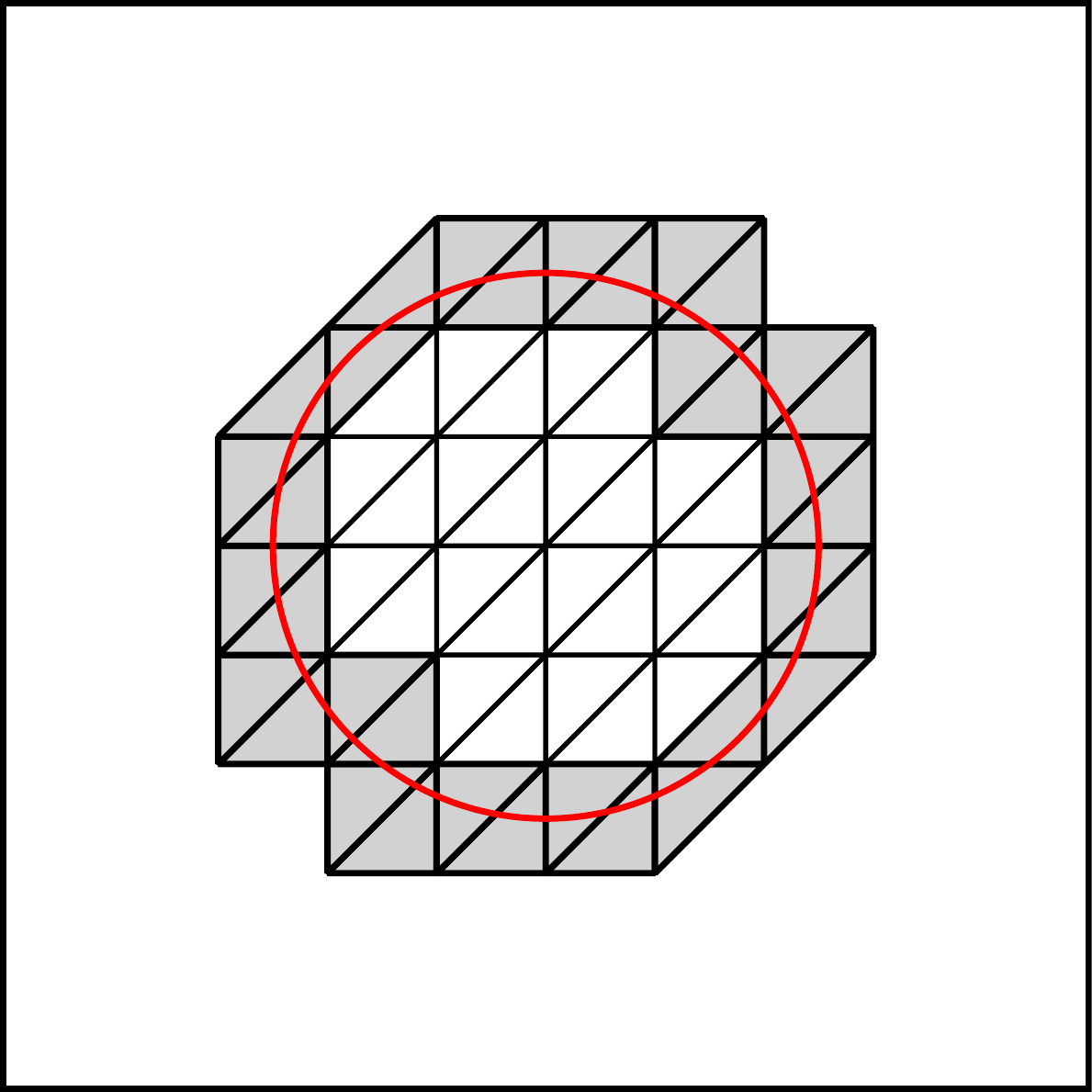}}
     \vspace{0.1in}
  \subcaptionbox{\label{fig:exteriormesh}}
  {\includegraphics[width=0.32\textwidth]{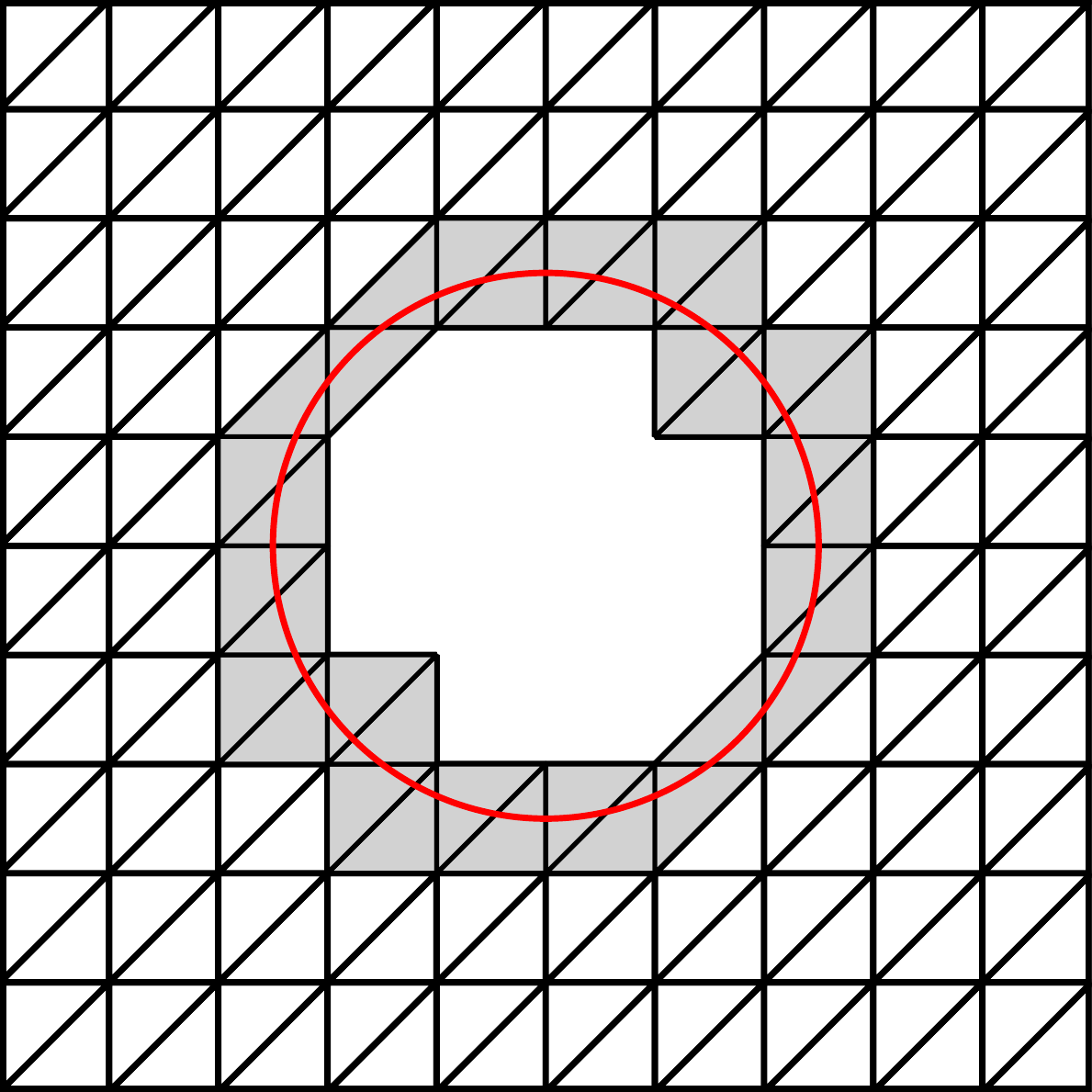}}
   \caption{Illustration of the overlapping domain decomposition of  $\Omega$  (a): Unfitted meshes on $\Omega$; (b): Subdomain $\Omega_h^-$;
   (c): Subdomain  $\Omega_h^+$.}
   \label{fig:mesh}
\end{figure}

To handle the non-smoothness of the Bloch wave across the material interface, we decompose the fundamental domain $\Omega$ into two overlapping subdomains $\Omega_{h}^+$ and $\Omega_{h}^-$
\begin{equation}
	\Omega_h^+ = \{K \in \mathcal{T}_h: K \cap \Omega^+ \neq \emptyset \}, \quad \text{   and    }\quad
	\Omega_h^- = \{K \in \mathcal{T}_h: K \cap \Omega^- \neq \emptyset \}.
\end{equation}
We illustrate the decomposition in \cref{fig:mesh}. It is undeniable that intersection of  $\Omega^+_h$ and $\Omega^-_h$  is nonempty. 	
In that sense, $\Omega_h^{\pm}$ are termed as fictitious domains.
Similarly, we can define two subtriangulations $\mathcal{T}_{h}^+$
and $\mathcal{T}_{h}^+$  as
\begin{equation}
	\mathcal{T}_h^+ = \{K \in \mathcal{T}_h: K \subset\Omega_h^+  \}, \quad \text{   and    }\quad
	\mathcal{T}_h^- = \{K \in \mathcal{T}_h: K \subset\Omega_h^- \}.\end{equation}
The common subsets of $\mathcal{T}_h^{+}$ and $\mathcal{T}_h^{-}$ is denoted by $\mathcal{T}_{\Gamma,h}$ which denotes  the set of  interface elements.

Based on the overlapping domain composition, we can definite the finite element space on each of them independently. To do this, let  $V_{h}^{s}$ ($s=\pm$)  be the standard continuous  linear finite element space on $\Omega_{h}^{s}$, i.e.
\begin{equation}
V_{h}^{s} = \left\{ \bm{v}_h \in \left[C^0(\Omega_{h}^{s})\right]^2:  \bm{v}|_{K} \in \left[\mathbb{P}_1(K)\right]^2 \text{ for any } K \in \mathcal{T}_{h}^{s}\right\},
\end{equation}
with  $\mathbb{P}_k(K)$ being  the space of polynomials of  degree   $k$ on  the element $K$.

Then the finite element space  for  the unfitted Nitsche's method is defined as  $V_h = V^+_{h} \oplus V^-_{h}$, i.e.
 \begin{equation}\label{equ:femspace}
V_h = \left\{ \bm{v}_h = (\bm{v}_{h}^+, v_{h}^-): \bm{v}_{h}^s\in V_{h}^s, \, s = \pm \right\}.
\end{equation}
To impose the periodic boundary condition,  we introduce $V_{h, per}$ as a subspace of $V_h$ which is defined as
\begin{equation}
V_{h,per} = \left\{ \bm{v}_h\in V_h: \bm{v}_h(\bm{x}\pm\bm{a}_j)=\bm{v}_h(\bm{x}) \text{ on } \partial\Omega \text{ and } j = 1, 2\right\}.
\end{equation}
Note that for the  interface element $K\in\mathcal{T}_{\Gamma,h}$,  there are two sets
of vector-valued  basis functions:  one for $V_{h}^+$ and the other for $V_{h}^-$.

For any interface element $K\in \mathcal{T}_{\Gamma,h}$,  let $K^{\pm}$   denote the part of the triangle inside $\Omega^{\pm}$  and $|K^{\pm}|$ denote the area of $K^{\pm}$.  Similarly, let $\Gamma_K=\Gamma\cap K$ be the part of $\Gamma$ in the element $K$ and let $|\Gamma_K|$ be the length of $\Gamma_K$.
Before defining the weak formulation,   it is necessary to introduce some parameters.  For $s=\pm$, let $\beta^{s} = 2\mu^{s}+\lambda^{s}$.
Define  two weights  as \cite{StLuK2020}
\begin{equation}
	\kappa^+ = \frac{\beta^-}{\beta^++\beta^-}, \quad
	\kappa^- =  \frac{\beta^+}{\beta^++\beta^-},
\end{equation}
which satisfy $\kappa^++\kappa^-=1$. Based on the two weights, we can define a weighted averaging of the displacement vector on the interface $\Gamma$  as
\begin{equation}\label{equ:wa}
	\dgal{\bm{u}} = \kappa^+ \bm{u}^+ + \kappa^-\bm{u}^-.
\end{equation}
Also, we define the stabilizing parameter for the weak formulation
\begin{equation}
	\gamma = \hat{\gamma}\frac{\beta^+\beta^-}{\beta^++\beta^-}
\end{equation}
where $\hat{\gamma}$ is a sufficiently large constant called stabilizing parameter.

Define the Nitsche's sesquilinear form $a_h(\cdot, \cdot): H^1_{per}(\Omega) \times H^1_{per}(\Omega)\rightarrow \mathbb{R}$ as
\begin{equation}\label{equ:bilinear}
\begin{split}
	a_h(\bm{u}_h, \bm{v}_h) =
	&\sum_{s = \pm}\int_{\Omega^s}
	\bm{\mathrm{C}}(\nabla_{\bm{k}}\odot \bm{u}_h): (\overline{\nabla_{\bm{k}}\odot \bm{v}_h})d\bm{x}\\
		&+\int_{\Gamma}\dgal{\bm{\mathrm{C}}\left(\nabla_{\bm{k}}\odot \bm{u}_h\right)\bm{n}}\cdot \llbracket  \overline{ \bm{v_h}} \rrbracket ds\\
	&+\int_{\Gamma} \llbracket  \bm{u_h} \rrbracket \cdot \dgal{\overline{\bm{\mathrm{C}}\left(\nabla_{\bm{k}}\odot \bm{u}_h\right)\bm{n}}}  ds \\
&+\frac{\gamma}{h} \int_{\Gamma} \llbracket  \bm{u_h}  \rrbracket \cdot
\llbracket \bm{\overline{v_h}} \rrbracket ds,
\end{split}
\end{equation}
where $h$ is the mesh size,  $\bm{\mathrm{C}}$ is the fourth-order stiffness tensor, and  $A:B$ is the  Frobenius inner product  of two matrices $A$ and $B$.

Given a quasi-momentum $\bm{k}$ in the  Brillouin zone,  the unfitted Nitsche's method for the eigenvalue problem \eqref{equ:neweigen} is to find  the eigenpair $(\omega_h^2, \bm{u}_h) \in \mathbb{R}\times V_{h, per}$ such that
\begin{equation} \label{equ:nit}
	a_h(\bm{u}_h, \bm{v}_h) =
	\omega_h^2 b(\bm{u}_h, \bm{v}_h), \quad\forall
	 \,\bm{v}_h \in  V_{h, per},
\end{equation}
where
\begin{equation}
	b(\bm{u}_h, \bm{v}_h) =
	\int_{\Omega}
	\rho\bm{u}_h\overline{\bm{v}_h}d\bm{x}.
\end{equation}

\begin{remark}\label{rmk:equiv}
	Using the definition of the fourth-order stiffness tensor $\bm{\mathrm{C}}$,  we can write the Nitsche's sesquilinear form $a_h(\cdot, \cdot)$ into the following equivalent form
	\begin{equation}\label{equ:altbilinear}
\begin{split}
	&a_h(\bm{u}_h, \bm{v}_h)\\
	 = &2\mu\sum_{s = \pm}\int_{\Omega^s}
	(\nabla_{\bm{k}}\odot \bm{u}_h): (\overline{\nabla_{\bm{k}}\odot \bm{v}_h})d\bm{x}
	+2\lambda\sum_{s = \pm}\int_{\Omega^s}
	(\nabla_{\bm{k}}\cdot \bm{u}_h)(\overline{\nabla_{\bm{k}}\cdot \bm{v}_h})d\bm{x}\\
		&+\int_{\Gamma}\dgal{2\mu(\nabla_{\bm{k}}\odot
		\bm{u}_h)\bm{n} +
		\lambda(\nabla_{\bm{k}}\cdot \bm{u}_h)\bm{n}}\cdot \llbracket  \overline{ \bm{v_h}} \rrbracket ds\\
		&+\int_{\Gamma} \llbracket  \bm{u_h} \rrbracket \cdot \dgal{\overline{2\mu(\nabla_{\bm{k}}\odot
		\bm{v}_h)\bm{n} +
		\lambda(\nabla_{\bm{k}}\cdot \bm{v}_h)\bm{n}}} ds \\
		&+\frac{\gamma}{h} \int_{\Gamma} \llbracket  \bm{u}_h  \rrbracket \cdot
		\llbracket \bm{\overline{\bm{v}_h}} \rrbracket ds.
\end{split}
\end{equation}
From this equivalent expression,  it is not difficult to see  that $a_h(\cdot, \cdot)$  is a  symmetric sesquilinear form and  the eigenvalues $\omega_h^2$ are real.
\end{remark}

\begin{remark}
	To make the method be more robust with respect to small element cut, we can  adopt the ghost penalty technique \cite{HaLL2017, Burm2010} to add more stabilizing
	terms in the vicinity of cut element.
\end{remark}

\subsection{Well-posedness of unfitted Nitsche's method} This subsection is devoted to  establishing  the well-posedness of the proposed unfitted Nitsche's method \cref{equ:nit}.  We start it with by showing the following consistency results:

\begin{theorem}
	Let $(\omega^2, \bm{u})$ be the eigenpair of the eigenvalue problem \cref{equ:neweigen}. Then,  we have
	\begin{equation} \label{equ:consist}
		a_h( \bm{u},  \bm{v}) = \omega^2 b_h( \bm{u},  \bm{v}), \quad \forall\,  \bm{v} \in H^1_{per}(\Omega).
	\end{equation}
\end{theorem}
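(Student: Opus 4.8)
The plan is to obtain \eqref{equ:consist} directly from the strong form \eqref{equ:neweigen} by testing the equation against $\overline{\bm{v}}$ and integrating by parts on $\Omega^{+}$ and $\Omega^{-}$ separately. First I would record the two facts that kill the interface stabilization: any $\bm{v}\in H^1_{per}(\Omega)$ lies in $H^1(\Omega)$ and is therefore single-valued across $\Gamma$, so $\llbracket\bm{v}\rrbracket=0$; and the exact eigenfunction obeys the displacement condition $\llbracket\bm{u}\rrbracket=0$. Looking at the definition \eqref{equ:bilinear} of $a_h(\bm{u},\bm{v})$, the symmetry term and the penalty term each carry a factor $\llbracket\bm{u}\rrbracket$ and hence vanish, while the consistency term carries a factor $\llbracket\overline{\bm{v}}\rrbracket$ and also vanishes. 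Thus $a_h(\bm{u},\bm{v})$ collapses to its bulk contribution $\sum_{s=\pm}\int_{\Omega^s}\bm{\mathrm{C}}(\nabla_{\bm{k}}\odot\bm{u}):\overline{(\nabla_{\bm{k}}\odot\bm{v})}\,d\bm{x}$.

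Next I would apply Green's formula for the shifted elasticity operator on each subdomain. Writing the modified stress $\bm{\sigma}_{\bm{k}}[\bm{u}]=\bm{\mathrm{C}}(\nabla_{\bm{k}}\odot\bm{u})$, using its symmetry together with $\overline{\nabla_{\bm{k}}\odot\bm{v}}=(\nabla-\mathrm{i}\bm{k})\odot\overline{\bm{v}}$, the integrand reduces to $\bm{\sigma}_{\bm{k}}[\bm{u}]:((\nabla-\mathrm{i}\bm{k})\otimes\overline{\bm{v}})$. Integrating the genuine-derivative part by parts on $\Omega^s$ and leaving the $\mathrm{i}\bm{k}$ part untouched, the two pieces recombine into the volume density $-(\nabla_{\bm{k}}\cdot\bm{\sigma}_{\bm{k}}[\bm{u}])\cdot\overline{\bm{v}}=-(\mathcal{L}_{\bm{k}}\bm{u})\cdot\overline{\bm{v}}$ plus a boundary flux $(\bm{\sigma}_{\bm{k}}[\bm{u}]\bm{n}^s)\cdot\overline{\bm{v}}$ on $\partial\Omega^s$, with $\bm{n}^s$ the outward normal of $\Omega^s$. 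Invoking the strong equation $\mathcal{L}_{\bm{k}}\bm{u}=\omega^2\rho\bm{u}$ on $\Omega\setminus\Gamma$ turns the sum of the volume densities into $\omega^2\int_\Omega\rho\,\bm{u}\cdot\overline{\bm{v}}\,d\bm{x}=\omega^2 b(\bm{u},\bm{v})$.

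It then remains to dispose of the boundary fluxes. On the outer boundary $\partial\Omega$ the contribution cancels, because $\bm{u}$, $\bm{\sigma}_{\bm{k}}[\bm{u}]$ and $\bm{v}$ take equal values on opposite edges while the outward normals are opposite; this is the standard cancellation enforced by the periodicity built into $H^1_{per}(\Omega)$. On $\Gamma$ the fluxes from the two sides assemble into $-\int_\Gamma\llbracket\,\bm{\sigma}_{\bm{k}}[\bm{u}]\bm{n}\cdot\overline{\bm{v}}\,\rrbracket\,ds$, and since $\bm{v}$ is single-valued this equals $-\int_\Gamma\llbracket\bm{\sigma}_{\bm{k}}[\bm{u}]\bm{n}\rrbracket\cdot\overline{\bm{v}}\,ds$, which vanishes by the traction interface condition $\llbracket\bm{\mathrm{C}}(\nabla_{\bm{k}}\odot\bm{u})\bm{n}\rrbracket=0$. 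Collecting everything yields $a_h(\bm{u},\bm{v})=\omega^2 b(\bm{u},\bm{v})$, which is the assertion (reading the $b_h$ of the statement as the mass form $b$).

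The step I expect to be most delicate is the integration by parts in the second paragraph. Because $\nabla_{\bm{k}}=\nabla+\mathrm{i}\bm{k}$ is complex and the test function enters conjugated, one must verify that conjugation sends $\mathrm{i}\bm{k}$ to $-\mathrm{i}\bm{k}$, and that after integrating only the genuine-derivative part the surviving $\mathrm{i}\bm{k}$ term recombines with it to reproduce \emph{exactly} $\nabla_{\bm{k}}\cdot\bm{\sigma}_{\bm{k}}[\bm{u}]$ with the correct sign and with no spurious interface term. Once that bookkeeping is confirmed, together with the $\partial\Omega$ periodicity cancellation, the remaining manipulations are routine.
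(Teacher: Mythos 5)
Your proposal is correct and takes essentially the same route as the paper, whose one-line proof just invokes the interface conditions $\llbracket\bm{u}\rrbracket=\llbracket\bm{\mathrm{C}}(\nabla_{\bm{k}}\odot\bm{u})\bm{n}\rrbracket=0$ together with Green's formula in the standard Nitsche fashion; you have merely executed that argument in full, including the vanishing of the penalty/symmetry/consistency terms for single-valued $\bm{v}\in H^1_{per}(\Omega)$ and the periodicity cancellation on $\partial\Omega$. One cosmetic remark: your intermediate identity $-(\nabla_{\bm{k}}\cdot\bm{\sigma}_{\bm{k}}[\bm{u}])\cdot\overline{\bm{v}}=-(\mathcal{L}_{\bm{k}}\bm{u})\cdot\overline{\bm{v}}$ inherits the paper's sign-inconsistent definition of $\mathcal{L}_{\bm{k}}$; under the convention consistent with $\mathcal{L}\bm{\phi}=-\nabla\cdot\bm{\sigma}[\bm{\phi}]$, namely $\mathcal{L}_{\bm{k}}\bm{u}=-\nabla_{\bm{k}}\cdot\bm{\mathrm{C}}(\nabla_{\bm{k}}\odot\bm{u})$, the bookkeeping yields $+\omega^{2}b(\bm{u},\bm{v})$ exactly as you conclude.
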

\begin{proof}
 The solution $\bm{u}$ satisfies $\llbracket \bm{u} \rrbracket  =
\llbracket \bm{\mathrm{C}}\left(\nabla_{\bm{k}}\odot\bm{u} \right)\bm{n}\rrbracket = 0, \text{on }  \Gamma$. Using this fact and the Green's formulation,  We can derive the Nitsche's weak formulation via the same technique as Nitsche's  method for general boundary conditions as in \cite{JuSt2009}.
\end{proof}

Taking $\bm{v}_h$ as a function in the unfitted Nitsche's finite element space $V_{h, per}$ in \cref{equ:consist}, it is straightforward to verify  that
\begin{equation}\label{equ:gal}
	a_h(\bm{u}-\bm{u}_h, \bm{v}_h) = 0, \quad \forall
	\bm{v}_h \in V_{h, per},
\end{equation}
which is termed as the Galerkin orthogonality.

We are now in a position to show the stability of the unfitted Nitsche's method. Before that, we need to introduce some norms.  For any quasi-momentum in the  Brillouin zone,  we introduce the following  norm
\begin{equation}\label{equ:norm}
\begin{split}
	\VERT\bm{v}_h\VERT^2 = &\sum_{s=\pm}\left(\bm{\mathrm{C}}
	\left(\nabla_{\bm{k}}\odot
	\bm{v}_h \right) ,  \nabla_{\bm{k}}\odot
	\bm{v}_h
	\right)_{\Omega^s} +  \\
		&\sum_{K\in\mathcal{T}_{\Gamma,h}}h
	\|
	\dgal{\bm{\mathrm{C}}(\nabla_{\bm{k}}\odot
	\bm{v}_h)\bm{n} }\|_{0, \Gamma_K}^2+\\
	&
	\sum_{K\in\mathcal{T}_{\Gamma,h}}\frac{\gamma}{h}\|
	\llbracket \bm{v}_h \rrbracket\|_{0, \Gamma_K}^2.
	\end{split}
\end{equation}

To show the well-posedness of the unfitted Nitsche's method, we need several technical lemmas.   We begin with the trace inequality.
\begin{lemma}\label{lem:traceinequal}
Let $\bm{v}_h$ be a finite element function in $V_{per,h}$ and $\bm{k}$ be a quasi-momentum in the  Brillouin zone.  For $s = \pm$, the following inequalities hold:
 \begin{align}
&\| \dgal{\bm{\mathrm{C}}(\bm{k}\odot \bm{v}_h)\bm{n}}\|_{0, \Gamma_K}^2  \le C_1h^{-1} \| \bm{\mathrm{C}}(\bm{k}\odot \bm{v}_h)\|_{0, K^+\cup K^-}^2, \label{equ:l2tr}\\
& \|\dgal{\bm{\mathrm{C}}(\nabla\odot \bm{v}_h)\bm{n}}\|_{0, \Gamma_K}^2  \le C_2h^{-1} \| \bm{\mathrm{C}}(\nabla\odot \bm{v}_h)\|_{0, K^+\cup K^-}^2. \label{equ:h1tr}
\end{align}
\vspace{-0.2in}
\end{lemma}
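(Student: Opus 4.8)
The plan is to reduce both estimates to a single trace inequality for polynomials on a cut element, applied separately on the two sides $s=\pm$. First I would unfold the weighted average via its definition \eqref{equ:wa}: since $\dgal{\bm{w}} = \kappa^+\bm{w}^+ + \kappa^-\bm{w}^-$ with $\kappa^\pm\in(0,1)$ and $\kappa^++\kappa^-=1$, the elementary bound $|\kappa^+ a+\kappa^- b|^2\le 2|a|^2+2|b|^2$ gives, for either $\bm{w}=\bm{\mathrm{C}}(\bm{k}\odot\bm{v}_h)$ or $\bm{w}=\bm{\mathrm{C}}(\nabla\odot\bm{v}_h)$,
\[
\|\dgal{\bm{w}\bm{n}}\|_{0,\Gamma_K}^2 \le 2\sum_{s=\pm}\|\bm{w}^s\bm{n}\|_{0,\Gamma_K}^2 \le 2\sum_{s=\pm}\|\bm{w}^s\|_{0,\Gamma_K}^2,
\]
where in the last step I use $|\bm{w}^s\bm{n}|\le|\bm{w}^s|$, valid because $\bm{n}$ is a unit vector and $|A|=\sqrt{A:A}$ is the Frobenius norm induced by the inner product $A:B$. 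This isolates a one-sided quantity on $\Gamma_K$ whose polynomial degree I can track explicitly.

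Next I would record the algebraic degree of each integrand. Because $\bm{v}_h^s\in[\mathbb{P}_1(K)]^2$ on every interface element $K$, the symmetric gradient $\nabla\odot\bm{v}_h^s$ is a constant matrix, so $\bm{\mathrm{C}}(\nabla\odot\bm{v}_h^s)$ is piecewise constant and \eqref{equ:h1tr} concerns a degree-zero polynomial. By contrast $\bm{k}\odot\bm{v}_h^s$ is affine, being a constant vector $\bm{k}$ symmetrized against a linear field, hence $\bm{\mathrm{C}}(\bm{k}\odot\bm{v}_h^s)$ is a degree-one polynomial and \eqref{equ:l2tr} concerns an affine quantity. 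In both cases the relevant object $\bm{w}^s$ is a polynomial defined on the whole triangle $K$, not merely on $K^s$, which is exactly what lets a trace inequality enter.

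The heart of the argument is then the cut-element trace inequality: for a componentwise polynomial $p$ of degree $\le 1$ on the shape-regular triangle $K$ cut by $\Gamma_K$,
\[
\|p\|_{0,\Gamma_K}^2 \le C\,h^{-1}\|p\|_{0,K^s}^2,
\]
with $C$ depending only on the polynomial degree and the shape regularity of $\mathcal{T}_h$, and independent of how $\Gamma$ slices $K$. I would prove this by scaling to a reference configuration: writing $K=h\hat K$ with $\hat K$ the fixed isosceles right triangle, one has $\|p\|_{0,\Gamma_K}^2=h\|\hat p\|_{0,\hat\Gamma}^2$ and $\|p\|_{0,K^s}^2=h^2\|\hat p\|_{0,\hat K^s}^2$, so the claim reduces to the scale-free estimate $\|\hat p\|_{0,\hat\Gamma}^2\le C\|\hat p\|_{0,\hat K^s}^2$, the surviving factor $h^{-1}$ coming precisely from the mismatch between the length scaling on $\Gamma_K$ and the area scaling on $K^s$. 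Summing over $s=\pm$ and absorbing constants into $C_1,C_2$ then yields \eqref{equ:l2tr} and \eqref{equ:h1tr}.

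I expect the reference-configuration estimate to be the main obstacle, precisely because the position of $\hat\Gamma$ inside $\hat K$ is arbitrary and the matching piece $\hat K^s$ can degenerate to a thin sliver, in which case $\|\hat p\|_{0,\hat K^s}$ becomes small while $\|\hat p\|_{0,\hat\Gamma}$ need not. The delicate point is that the bound is against the norm on the \emph{same} piece $K^s$ rather than on all of $K$, so one cannot simply invoke the standard full-element trace inequality. I would secure the uniform constant either by a shape-regularity/mesh-resolution hypothesis on the cut, in the spirit of the Hansbo--Hansbo framework, which bounds $|\hat K^s|$ from below, or, to retain robustness against arbitrarily small cuts, by the ghost-penalty augmentation mentioned in the preceding remark, which extends the one-sided control from $K^s$ to a macro-element on which the polynomial is uniformly controlled. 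Once this reference estimate is in hand, the remaining steps are routine scaling and summation.
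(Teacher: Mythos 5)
Your outer reduction coincides with the paper's own proof: the paper likewise unfolds the weighted average using $\kappa^++\kappa^-=1$ and $\kappa^{\pm}\le 1$ to get $\|\dgal{\bm{w}\bm{n}}\|_{0,\Gamma_K}^2\le 2\sum_{s=\pm}\|\bm{w}^s\|_{0,\Gamma_K}^2$, and likewise exploits that for $\mathbb{P}_1$ elements $\nabla\odot\bm{v}_h^s$ is constant while $\bm{k}\odot\bm{v}_h^s$ is affine on each interface element. The only structural difference is how the one-sided cut trace inequality is discharged: the paper does not prove it, citing \cite{HaHa2004} for the piecewise-constant case \eqref{equ:h1tr} and applying \cite[Lemma 3.1]{WuXi2019} entrywise to the matrix $\bm{\mathrm{C}}(\bm{k}\odot\bm{v}_h^s)$ for the affine case \eqref{equ:l2tr}, whereas you attempt a self-contained scaling proof and stop at the reference estimate $\|\hat{p}\|_{0,\hat{\Gamma}}\le C\|\hat{p}\|_{0,\hat{K}^s}$.

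Your diagnosis of that final step is the one genuine gap in your write-up, but it is a gap you correctly identified rather than papered over, and it is shared by the paper's own formulation. Taking $\hat{p}\equiv 1$ and a cut for which $\hat{K}^s$ is a sliver of width $\delta$ along $\hat{\Gamma}$ gives $\|\hat{p}\|_{0,\hat{\Gamma}}^2/\|\hat{p}\|_{0,\hat{K}^s}^2\sim\delta^{-1}$, so no constant uniform in the cut location exists for the same-side estimate, and no scaling argument can close it as stated. The inequality actually available in the literature controls the trace by the polynomial's norm over the \emph{whole} element, $\|p\|_{0,\Gamma_K}^2\le Ch^{-1}\|p\|_{0,K}^2$, which is legitimate here because $\bm{v}_h^s$ is a polynomial on all of $K$, not just on $K^s$; note in passing that the paper's intermediate inequality \eqref{equ:temptrace} contains a typo ($h$ for $h^{-1}$) and writes $K^s$ where the cited lemma gives $K$. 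With the fixed, material-dependent weights $\kappa^{\pm}$ used in this paper --- as opposed to the cut-dependent weights $\kappa^s=|K^s|/|K|$ of Hansbo--Hansbo --- uniformity of the stated $K^+\cup K^-$ bound with respect to small cuts genuinely requires either a no-small-cut assumption or the ghost-penalty stabilization, and these are exactly the two remedies you propose (the second matching the paper's own remark on ghost penalty). So your proposal reproduces the paper's argument up to its citation layer, and at the crux it pinpoints, correctly, the small-cut sensitivity that the paper's proof resolves only by appeal to \cite{HaHa2004} and \cite{WuXi2019}.
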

\begin{proof}
	The proof of \eqref{equ:h1tr} is based on the fact that $\nabla\odot\bm{u}^s_h$ is constant which can be found in \cite{HaHa2004}.  To show \eqref{equ:l2tr}, we use the following inequality for each component of the vector-valued function $\bm{v}_h^s
=(v_{h,1}^s, v_{h,2}^s)^T$
\begin{equation}\label{equ:temptrace}
	\|v_{h,i}^s\|_{0, \Gamma_K}^2\le C_3 h \|v_{h,i}^s\|_{0, K^s}^2
\end{equation}
for $i=1,2$ and $s=\pm$. The inequality \eqref{equ:temptrace} is  proved in
\cite[Lemma 3.1]{WuXi2019}.  Let
\begin{equation*}
	\bm{\mathrm{C}}(\bm{k}\odot \bm{v}_h^s)
	=\begin{pmatrix}
		w_{11}^s&w_{12}^s\\
		w_{11}^s&w_{11}^s
	\end{pmatrix}.
\end{equation*}
Using \eqref{equ:temptrace}, we deduce that
\begin{equation}
\begin{split}
		 \| \bm{\mathrm{C}}(\bm{k}\odot \bm{v}_h^s)\|_{0, \Gamma_K}^2
	 = &\sum_{i,j=1}^2 \| w_{ij}^s\|_{0, \Gamma_K}^2
	 \le C_3 h^{-1} \|w_{ij}^s\|_{0, K^s}^2 \\
	 =&C_3h^{-1}
	 \| \bm{\mathrm{C}}(\bm{k}\odot \bm{v}_h)\|_{0, K^s}^2.
\end{split}
\end{equation}
By the definition of the weighted averaging \eqref{equ:wa} and the fact $\kappa^{\pm}\le 1$, we obtain
that
\begin{equation*}
\begin{split}
	\|\dgal{\bm{\mathrm{C}}(\bm{k}\odot \bm{v}_h)\bm{n}}\|_{0, \Gamma_K}^2  & \le 2\| \bm{\mathrm{C}}(\bm{k}\odot \bm{v}_h^+)\|_{0, \Gamma_K}^2 + 2\| \bm{\mathrm{C}}
	(\bm{k}\odot \bm{v}_h^-)\|_{0, \Gamma_K}^2\\
	 &\le 2C_3h^{-1}
	 \left(\| \bm{\mathrm{C}}(\bm{k}\odot \bm{v}_h^+)\|_{0, K^+}	^2
	 +\|\bm{\mathrm{C}}(\bm{k}\odot \bm{v}_h^-)\|_{0, K^-}	^2\right)\\
	 &= 2C_3h^{-1}\| \bm{\mathrm{C}}(\bm{k}\odot \bm{v}_h)\|_{0, K^+\cup K^-}^2,
	\end{split}
\end{equation*}
which completes the proof of \eqref{equ:l2tr} with $C_1=2C_3$.
\end{proof}

Next, we establish the following relationship between the strain and stress tensor
\begin{lemma}\label{lem:invrel}
	Let $\bm{\mathrm{C}}$ be the fourth-order stiffness tensor \eqref{equ:comprelation} and $A$ be any symmetric second-order tensor.  Then, the following inequality holds
\begin{equation}\label{equ:trone}
		\bm{\mathrm{C}}A:\bm{\mathrm{C}}A \le (4\mu+2\lambda) \bm{\mathrm{C}}A:A.
	\end{equation}
\end{lemma}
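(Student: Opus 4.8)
The plan is to prove the estimate by a direct computation that exploits the explicit form of the stiffness tensor in \eqref{equ:comprelation}. Fix a symmetric second-order tensor $A$ and set $t = \mathrm{tr}(A) = A:\mathbb{I}_2$, so that $\bm{\mathrm{C}}A = 2\mu A + \lambda t\,\mathbb{I}_2$. Expanding the Frobenius inner product on the right-hand side and using $\mathbb{I}_2:A = \mathrm{tr}(A) = t$, I would first obtain
\[
	\bm{\mathrm{C}}A:A = 2\mu\,(A:A) + \lambda t^2 .
\]

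For the left-hand side I would expand $\bm{\mathrm{C}}A:\bm{\mathrm{C}}A = (2\mu A + \lambda t\,\mathbb{I}_2):(2\mu A + \lambda t\,\mathbb{I}_2)$ term by term. The only dimension-dependent ingredient is that in two dimensions $\mathbb{I}_2:\mathbb{I}_2 = 2$; combined again with $A:\mathbb{I}_2 = t$, this gives
\[
	\bm{\mathrm{C}}A:\bm{\mathrm{C}}A = 4\mu^2\,(A:A) + 4\mu\lambda t^2 + 2\lambda^2 t^2 .
\]

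It then remains to compare the two expressions. Multiplying the first identity by $(4\mu+2\lambda)$ and subtracting the second, the $t^2$ terms cancel exactly and I am left with
\[
	(4\mu+2\lambda)\,(\bm{\mathrm{C}}A:A) - \bm{\mathrm{C}}A:\bm{\mathrm{C}}A = 4\mu(\mu+\lambda)\,(A:A),
\]
which proves the lemma. The nonnegativity of this remainder is the only point at which the structure of the material enters: it follows from $A:A\ge 0$ together with the physical positivity of the Lam\'e parameters, $\mu>0$ and $\mu+\lambda\ge 0$ (the latter being precisely the condition ensuring $\bm{\mathrm{C}}$ is positive semidefinite on symmetric tensors).

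I do not expect any genuine obstacle here, since the statement reduces to a finite-dimensional algebraic identity followed by a sign check; the only care required is in tracking the factor $\mathbb{I}_2:\mathbb{I}_2 = 2$ and confirming the sign of the leftover term. As a sanity check, one may read the result off conceptually: viewed as a symmetric positive-definite operator on the space of symmetric $2\times 2$ tensors, $\bm{\mathrm{C}}$ acts as multiplication by $2\mu$ on traceless tensors and by $2\mu+2\lambda$ on the spherical part $\mathbb{I}_2$, so its largest eigenvalue is $2\mu+2\lambda$. The inequality $\bm{\mathrm{C}}A:\bm{\mathrm{C}}A \le C\,(\bm{\mathrm{C}}A:A)$ is equivalent to $\lambda_{\max}(\bm{\mathrm{C}})\le C$, and since $2\mu+2\lambda\le 4\mu+2\lambda$ the stated constant is valid, though not sharp.
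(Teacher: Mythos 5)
Your proof is correct, and it takes a recognizably different route from the paper's. The paper does not expand $\bm{\mathrm{C}}A:\bm{\mathrm{C}}A$ fully: it writes $\bm{\mathrm{C}}A:\bm{\mathrm{C}}A = 2\mu(\bm{\mathrm{C}}A:A) + \lambda\,\mathrm{tr}(\bm{\mathrm{C}}A)\,\mathrm{tr}(A)$, uses the trace identity $\mathrm{tr}(\bm{\mathrm{C}}A) = (2\mu+2\lambda)\mathrm{tr}(A)$, and then bounds $\lambda\,\mathrm{tr}(A)^2 \le \bm{\mathrm{C}}A:A$ (which follows from $2\mu A:A \ge 0$) to absorb the trace term, yielding the chain $\bm{\mathrm{C}}A:\bm{\mathrm{C}}A \le \bigl(2\mu + (2\mu+2\lambda)\bigr)\bm{\mathrm{C}}A:A$. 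You instead expand both sides completely in the two invariants $A:A$ and $t^2 = \mathrm{tr}(A)^2$ and obtain the exact remainder identity
\begin{equation*}
(4\mu+2\lambda)\,(\bm{\mathrm{C}}A:A) - \bm{\mathrm{C}}A:\bm{\mathrm{C}}A = 4\mu(\mu+\lambda)\,(A:A),
\end{equation*}
which I have verified is algebraically correct (including the factor $\mathbb{I}_2:\mathbb{I}_2 = 2$). Your version buys more: it shows exactly what is discarded, makes the minimal hypotheses explicit ($\mu \ge 0$ and $\mu+\lambda \ge 0$, i.e.\ positive semidefiniteness of $\bm{\mathrm{C}}$ --- the same assumptions the paper uses tacitly, since its absorption step also needs $2\mu+2\lambda \ge 0$), and via the spectral decomposition identifies the sharp constant as $\lambda_{\max}(\bm{\mathrm{C}})$. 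One small caveat in your sanity check: the largest eigenvalue of $\bm{\mathrm{C}}$ is $\max(2\mu,\,2\mu+2\lambda)$, which equals $2\mu+2\lambda$ only when $\lambda \ge 0$; this holds for the materials considered in the paper but not for all admissible Lam\'e parameters, and in either case the stated constant $4\mu+2\lambda$ dominates, so your main argument is unaffected. The paper's chain is marginally shorter; yours is more informative and no less elementary.
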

\begin{proof}
	By the definition of fourth-order stiffness tensor \eqref{equ:comprelation}, it follows that
	\begin{equation}\label{equ:tracerel}
		\mbox{tr}(\bm{\mathrm{C}}A) = \mbox{tr}(2\mu A + \lambda \mbox{tr}(A)\mathbb{I}_2)
		 = (2\mu+2\lambda)\mbox{tr}(A).
	\end{equation}
Notice that
\begin{equation}\label{equ:trtwo}
	\bm{\mathrm{C}}A:A =(2\mu A + \lambda \mbox{tr}(A)\mathbb{I}_2):A
	= 2\mu A:A + \lambda \mbox{tr}(A)^2 \ge \lambda \mbox{tr}(A)^2,
\end{equation}
where we have used the fact $A:A\ge 0$.  Using \eqref{equ:trone} and \eqref{equ:trtwo}, we can deduce that
\begin{equation}
	\bm{\mathrm{C}}A:\bm{\mathrm{C}}A = 2\mu(\bm{\mathrm{C}}A:A)
	+ \lambda \mbox{tr}(\bm{\mathrm{C}}A)\mbox{tr}(A) \le (4\mu+2\lambda)\bm{\mathrm{C}}A:A.
\end{equation}
\end{proof}

With the preparations, we are ready to show our main result

\begin{theorem}\label{thm:wellposed}
	Let $\bm{k}$ be a nonzero quasi-momentum in the  Brillouin zone.  Suppose the stabilizing parameter $\hat{\gamma}$ is large enough. Then, there exist $C_4, C_5>0$ such that the following continuity and coercivity results hold
	\begin{align}
		& a_h(\bm{u}_h, \bm{v}_h) \le C_4 \VERT \bm{v}_h \VERT \VERT \bm{u}_h \VERT,\label{equ:cont}\\
		& a_h(\bm{u}_h, \bm{u}_h) \ge C_5\VERT \bm{u}_h \VERT^2. \label{equ:coer}
	\end{align}
\end{theorem}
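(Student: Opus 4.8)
For the final statement I would prove the continuity bound \eqref{equ:cont} and the coercivity bound \eqref{equ:coer} by different means: continuity is pure bookkeeping via Cauchy--Schwarz, while coercivity carries all the difficulty and is where Lemmas \ref{lem:traceinequal} and \ref{lem:invrel} together with the Poincar\'e inequality \eqref{equ:poincare} enter.

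For continuity, I would apply Cauchy--Schwarz termwise to the four contributions of \eqref{equ:bilinear}. The volume integral is controlled by the first group of \eqref{equ:norm} because $(\bm{\mathrm{C}}\,\cdot\,,\cdot)$ is a genuine inner product (positive definiteness of $\bm{\mathrm{C}}$). On each $\Gamma_K$ I would split the trivial factor $h^{0}=h^{1/2}\cdot h^{-1/2}$, pairing $h^{1/2}\|\dgal{\bm{\mathrm{C}}(\nabla_{\bm k}\odot\bm u_h)\bm n}\|_{0,\Gamma_K}$ against $h^{-1/2}\|\llbracket\bm v_h\rrbracket\|_{0,\Gamma_K}$ in the consistency integral (and symmetrically for the adjoint consistency integral); a discrete Cauchy--Schwarz over $K\in\mathcal{T}_{\Gamma,h}$ then reproduces exactly the second and third groups of \eqref{equ:norm}, up to the harmless fixed constant $\gamma$. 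The penalty term matches the jump group directly. Collecting factors yields \eqref{equ:cont} with $C_4$ independent of $h$ and of the interface location.

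For coercivity, write $T_1,T_2,T_3$ for the three groups in \eqref{equ:norm}. Using the symmetry noted in \cref{rmk:equiv}, $a_h(\bm u_h,\bm u_h)=T_1+T_3+2\,\mathrm{Re}\,\mathcal{X}$, where $\mathcal{X}=\sum_{K}\int_{\Gamma_K}\dgal{\bm{\mathrm{C}}(\nabla_{\bm k}\odot\bm u_h)\bm n}\cdot\llbracket\overline{\bm u_h}\rrbracket\,ds$ is the indefinite cross term and $T_2$ is absent. The plan has two steps. The first is an inverse-type estimate $T_2\le C_6 T_1$ (detailed below). Granting it, I bound the cross term by Young's inequality, $2|\mathcal{X}|\le \delta T_2+(\delta\gamma)^{-1}T_3\le \delta C_6 T_1+(\delta\gamma)^{-1}T_3$, so that $a_h(\bm u_h,\bm u_h)\ge (1-\delta C_6)T_1+\bigl(1-(\delta\gamma)^{-1}\bigr)T_3$. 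Choosing $\delta=1/(2C_6)$ makes the first coefficient $1/2$, and then taking $\hat\gamma$ (hence $\gamma$) large enough that $(\delta\gamma)^{-1}\le 1/2$ makes the second coefficient positive; feeding $T_2\le C_6T_1$ back in to reinstate the $T_2$ part of the norm gives \eqref{equ:coer} with an explicit $C_5$.

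The heart of the argument, and the step I expect to be the \emph{main obstacle}, is the inverse estimate $T_2\le C_6T_1$. Here I would decompose $\nabla_{\bm k}\odot\bm u_h=\nabla\odot\bm u_h+\mathrm{i}\,\bm k\odot\bm u_h$ and use a triangle inequality on each $\Gamma_K$, then apply \eqref{equ:h1tr} to the element-wise constant gradient part and \eqref{equ:l2tr} to the $\bm k$-part; this absorbs the factor $h$ and replaces the boundary norms by the volume norms $\|\bm{\mathrm{C}}(\nabla\odot\bm u_h)\|_{0,K^+\cup K^-}^2$ and $\|\bm{\mathrm{C}}(\bm k\odot\bm u_h)\|_{0,K^+\cup K^-}^2$. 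Lemma \ref{lem:invrel}, applied to the real and imaginary parts of the (symmetric) tensors and using that $\bm{\mathrm{C}}$ has real coefficients, converts each $\|\bm{\mathrm{C}}A\|^2$ into the energy $\bm{\mathrm{C}}A:\overline A$ at the cost of the factor $4\mu^s+2\lambda^s$. Summing over interface elements and enlarging $K^{\pm}$ to the subdomains $\Omega^{\pm}$ leaves $(\bm{\mathrm{C}}(\nabla\odot\bm u_h),\nabla\odot\bm u_h)+(\bm{\mathrm{C}}(\bm k\odot\bm u_h),\bm k\odot\bm u_h)$, which the Poincar\'e inequality \eqref{equ:poincare} bounds by $C_0^{-1}T_1$. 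The delicate point is that \eqref{equ:poincare} must be invoked on a function that jumps across $\Gamma$, so it has to be applied to each periodic component $\bm u_h^s$ over $\Omega^s$ rather than globally, and one must verify that the nonzero quasi-momentum $\bm k$ (which makes $\mathcal{L}_{\bm k}$ positive definite) keeps $C_6$ uniform in $h$ and in the position of $\Gamma$; the constants $C_1,C_2$ of Lemma \ref{lem:traceinequal} are precisely what guarantee this cut-independence.
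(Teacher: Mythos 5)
Your proposal follows essentially the same route as the paper's own proof: Young's inequality on the interface cross term, the splitting $\nabla_{\bm{k}}\odot\bm{u}_h = \nabla\odot\bm{u}_h + \mathrm{i}\,\bm{k}\odot\bm{u}_h$ handled by the two trace inequalities of Lemma~\ref{lem:traceinequal}, Lemma~\ref{lem:invrel} to convert $\|\bm{\mathrm{C}}A\|^2$ into the energy $\bm{\mathrm{C}}A{:}A$, the Poincar\'e inequality \eqref{equ:poincare} to absorb everything into the $\nabla_{\bm{k}}$-energy, a sufficiently large $\hat{\gamma}$, and reinstatement of the flux-average term of \eqref{equ:norm} at the end. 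Your extra care --- stating the inverse bound $T_2\le C_6T_1$ as a standalone step, treating real and imaginary parts in Lemma~\ref{lem:invrel}, and applying \eqref{equ:poincare} on each subdomain component --- only fills in details the paper's proof glosses over, so the argument is correct and essentially identical.
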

\begin{proof}
	The continuity \eqref{equ:cont} is a direct implication the definition of  the  sesquilinear form \eqref{equ:bilinear} and the Cauchy-Schwartz inequality. It suffices to show the coercivity \eqref{equ:coer}.
	Letting  $\bm{v}_h=\bm{u}_h$ in \eqref{equ:bilinear} and applying the Young's inquality with $\epsilon$ imply
	\begin{equation*}
		\begin{split}
			a_h(\bm{u}_h, \bm{u}_h) =
	&\sum_{s = \pm}\int_{\Omega^s}
	\bm{\mathrm{C}}(\nabla_{\bm{k}}\odot \bm{u}_h): (\overline{\nabla_{\bm{k}}\odot \bm{u}_h})d\bm{x} + \frac{\gamma}{h} \int_{\Gamma} \llbracket  \bm{u_h}  \rrbracket \cdot
\llbracket \bm{\overline{u_h}} \rrbracket ds \\
		&+2\mbox{Re}\int_{\Gamma}\dgal{\bm{\mathrm{C}}\left(\nabla_{\bm{k}}\odot \bm{u}_h\right)\bm{n}}\cdot \llbracket  \overline{ \bm{u_h}} \rrbracket ds\\
 \ge & \sum_{s = \pm}\left(
	\bm{\mathrm{C}}(\nabla_{\bm{k}}\odot \bm{u}_h), \nabla_{\bm{k}}\odot \bm{u}_h \right)_{\Omega^s}
	  + \frac{\gamma-2\epsilon}{h} \|\llbracket  \bm{u_h}  \rrbracket\|_{0, \Gamma}^2  \\
	  & - \frac{h}{\epsilon}
	 \| \dgal{\bm{\mathrm{C}}\left(\nabla_{\bm{k}}\odot \bm{u}_h\right)\bm{n}}\|_{0, \Gamma}^2 \\
	 =: &  I_1 + I_2 - I_3.
		\end{split}
	\end{equation*}

Notice $I_1$ and $I_2$ are included in the mesh-dependent norm $ \VERT \cdot \VERT$. We only need to estimate $I_3$. Let $\mu_m = \max(\mu^+, \mu^-)$ and $\lambda_m = \max(\lambda^+, \lambda^-)$.   Using Lemma  \ref{lem:traceinequal} and Lemma \ref{lem:invrel}, we deduce that
\begin{equation*}
		\begin{split}
		I_3 =& \frac{h}{\epsilon}
	 \| \dgal{\bm{\mathrm{C}}\left(\nabla_{\bm{k}}\odot \bm{u}_h\right)\bm{n}}\|_{0, \Gamma}^2 \\
	 \le & \frac{2h}{\epsilon}
	 \| \dgal{\bm{\mathrm{C}}\left(\nabla\odot \bm{u}_h\right)\bm{n}}\|_{0, \Gamma}^2  +  \frac{2h}{\epsilon}
	 \| \dgal{\bm{\mathrm{C}}\left(\bm{k}\odot \bm{u}_h\right)\bm{n}}\|_{0, \Gamma}^2 \\
	 \le & \frac{2C_1}{\epsilon}
	 \| \bm{\mathrm{C}}(\nabla \odot \bm{u}_h)\|_{0,
	  \Omega^+\cup \Omega^-}^2
	 + \frac{2C_2}{\epsilon}
	 \| \bm{\mathrm{C}}(\bm{k}\odot \bm{u}_h)\|_{0,
	  \Omega^+\cup \Omega^-}^2\\
	  \le & \frac{2C_1(4\mu_m+ 2\lambda_m)}{\epsilon}
	  \sum_{s = \pm}\left(
	\bm{\mathrm{C}}(\nabla\odot \bm{u}_h), \nabla \odot \bm{u}_h \right)_{\Omega^s} + \\
	&\frac{2C_2(4\mu_m+ 2\lambda_m)}{\epsilon}
	  \sum_{s = \pm}\left(
	\bm{\mathrm{C}}(\bm{k}\odot \bm{u}_h), \bm{k} \odot \bm{u}_h \right)_{\Omega^s} \\
	\le & \frac{2C_4(4\mu_m+ 2\lambda_m)}{\epsilon}I_1,
		\end{split}
	\end{equation*}	
	where we have used the Poincar\'e inequality \eqref{equ:poincare}  and $C_4 = \max(C_0C_1, C_0C_2)$.

	Combining the above two estimates, we have
	\begin{equation*}
		\begin{split}
			&a_h(\bm{u}_h, \bm{u}_h)\\
			\ge
  & (1-\frac{2C_4(4\mu_m+ 2\lambda_m)}{\epsilon})
 \sum_{s = \pm}\left(
	\bm{\mathrm{C}}(\nabla_{\bm{k}}\odot \bm{u}_h), \nabla_{\bm{k}}\odot \bm{u}_h \right)_{\Omega^s}
	+
	   \frac{\gamma-2\epsilon}{h} \|\llbracket  \bm{u_h}  \rrbracket\|_{0, \Gamma}^2  \\
	   =&   (1-\frac{3C_4(4\mu_m+ 2\lambda_m)}{\epsilon})
 \sum_{s = \pm}\left(
	\bm{\mathrm{C}}(\nabla_{\bm{k}}\odot \bm{u}_h), \nabla_{\bm{k}}\odot \bm{u}_h \right)_{\Omega^s}
	  +\\
	 & \frac{h}{\epsilon}
	 \| \dgal{\bm{\mathrm{C}}\left(\nabla_{\bm{k}}\odot \bm{u}_h\right)\bm{n}}\|_{0, \Gamma}^2 +
	   \frac{\gamma-2\epsilon}{h} \|\llbracket  \bm{u_h}  \rrbracket\|_{0, \Gamma}^2 .
		\end{split}
	\end{equation*}
Taking $\epsilon = 6C_4(4\mu_m+2\lambda_m)$ and $\hat{\gamma} \ge 4\epsilon\frac{\beta^+\beta^-}{\beta^+\beta^-}$  concludes the proof of \eqref{equ:coer}.
\end{proof}

From \cref{thm:wellposed},  we can see the discrete sesquilinear form \eqref{equ:bilinear} is continuous and coercive  with respect to the mesh-dependent norm \eqref{equ:norm}.  The Lax-Milgram theorem implies the unfitted Nitsche's method \eqref{equ:nit} is well-posed. The spectral theory says the discrete eigenvalue of \eqref{equ:nit} can be listed as
\begin{equation}\label{equ:eigenvalue}
	0 < \omega^2_{h,1} \le \omega^2_{h,2} \le \cdots \le \omega^2_{h,n_h},
\end{equation}
and the corresponding $L^2$ eigenfunctions are $\bm{u}_{h,1}, \bm{u}_{h,2}, \ldots, \bm{u}_{h,n_h} $ where $n_h$ is the dimension of the Nitsche's finite element space $V_{h, per}$.

\section{Error estimates}
\label{sec:error}
In this section, we shall conduct the  error analysis for the proposed unfitted  Nitsche's  method \eqref{equ:nit} using the Babu\v{s}ka-Osborn theory. To prepare the error analysis, we  introduce an  extension operator $X^{s}$ ($s= \pm$)  to extend an $H^2$ function defined on a subdomain $\Omega^{s}$ to  the fundamental cell $\Omega$.  For a function $\bm{v}\in H^2(\Omega^s)$, the extended function $X^{s}\bm{v} \in H^2(\Omega)$ is defined to satisfy
\begin{equation}
	(X^s\bm{v})|_{\Omega^s} = \bm{v},
\end{equation}
and
\begin{equation}
	\|X^s\bm{v}\|_{r, \Omega} \le C \|\bm{v}\|_{r, \Omega^s}, \quad r = 0, 1, 2,
\end{equation}
for $s = \pm$.

For $s=\pm$, let $\pi_{h}^s$ be the Scott-Zhang interpolation operator \cite{ScZh1990} on  $H^1(\Omega^s_h)$. The interpolation operator on  the finite element space $V_{h, per}$  is  defined as
\begin{equation}\label{equ:interp}
	I_h\bm{v} = (\pi_{h}^+X^{+}\bm{v}, \pi_{h}^-X^{-}\bm{v})\in V_h.
\end{equation}
Using the same argument as in  \cite{HaLL2017}, we can establish the following approximation property in the mesh-depending norm \eqref{equ:norm}
\begin{equation}\label{equ:interperror}
	\VERT \bm{v} - I_h\bm{v}\VERT \lesssim h\|\bm{v}\|_{2, \Omega^+\cup \Omega^-}.
\end{equation}

To adopt the Babu\v{s}ka-Osborn theory\cite{BaOs1989,BaOs1991} , we define  solution operator $T: L^2(\Omega)\rightarrow H^1(\Omega^+\cup\Omega^-)$  as
\begin{equation}\label{equ:soloperator}
	a_h(T\bm{f},  \bm{v}) = b(\bm{f}, \bm{v}), \quad \forall \bm{v}
	\in H^1(\Omega^+\cup\Omega^-)
\end{equation}
for any $\bm{f}\in L^2(\Omega)$.  The interface eigenvalue problem  \eqref{equ:neweigen} can be reinterpreted as
\begin{equation}
	T\bm{u} = \zeta\bm{u},
\end{equation}
where $\zeta^{-1} = \omega^2$.

In a similar way, we can define the discrete solution operator $T_h: L^2(\Omega) \rightarrow V_{h, per}$ as
\begin{equation}\label{equ:dissoloperator}
	a_h(T_h\bm{f},  \bm{v}_h) = b(\bm{f}, \bm{v}_h),
	 \quad \forall \bm{v} \in V_{h, per},
\end{equation}
for any $\bm{f}\in L^2(\Omega)$. Using the discrete solution operator,  the discrete eigenvalue problem\eqref{equ:nit} is equivalent to
\begin{equation}
	T_h\bm{u}_h = \zeta_h\bm{u}_h,
\end{equation}
where $\zeta^{-1}_h = \omega^2_h$.

From the definition \eqref{equ:bilinear}, the sesquilinear form $a_h(\cdot, \cdot)$ is Hermitian which implies both $T$ and $T_h$ are self-adjoint.  It is also note that  $T$ and $T_h$ are compact operators from $L^2(\Omega)$ to $L^2(\Omega)$.
For the solution operators $T$ and $T_h$, we have the following approximation operator:
\begin{theorem}\label{thm:solopapp}
	Let $T$  be the solution operator defined in \eqref{equ:soloperator} and $T_h$ be the discrete solution operator define in \eqref{equ:dissoloperator}.  Then, we have
  \begin{align}
  	&\VERT T\bm{f} - T_h\bm{f} \VERT
  	\le C h\|\bm{f}\|_{0, \Omega}, \label{equ:energyerror}\\
   &\|T\bm{f}- T_h\bm{f}\|_{0, \Omega}
    \le C h^2\|\bm{f}\|_{0, \Omega}\label{equ:l2error}.
  \end{align}
\end{theorem}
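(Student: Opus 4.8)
The plan is to recognize that $T_h\bm{f}$ is nothing but the Galerkin projection of $T\bm{f}$ onto $V_{h,per}$, so both estimates follow from the standard Céa/Aubin--Nitsche machinery once two ingredients are in place: the continuity and coercivity of \cref{thm:wellposed}, and the piecewise $H^2$ elliptic regularity of the interface problem, namely $\|T\bm{f}\|_{2,\Omega^+\cup\Omega^-}\le C\|\bm{f}\|_{0,\Omega}$, which holds because $\Gamma$ is smooth and the coefficients are piecewise constant. The first step is to record Galerkin orthogonality: since $V_{h,per}\subset H^1(\Omega^+\cup\Omega^-)$, subtracting the defining identities \eqref{equ:soloperator} and \eqref{equ:dissoloperator} tested against $\bm{v}_h\in V_{h,per}$ gives $a_h(T\bm{f}-T_h\bm{f},\bm{v}_h)=0$ for all $\bm{v}_h\in V_{h,per}$.

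For the energy estimate \eqref{equ:energyerror} I would split the error as $T\bm{f}-T_h\bm{f}=\bm{\eta}+\bm{\xi}$, with interpolation part $\bm{\eta}=T\bm{f}-I_hT\bm{f}$ and discrete part $\bm{\xi}=I_hT\bm{f}-T_h\bm{f}\in V_{h,per}$. The term $\bm{\eta}$ is handled directly by the interpolation bound \eqref{equ:interperror} together with elliptic regularity, giving $\VERT\bm{\eta}\VERT\lesssim h\|\bm{f}\|_{0,\Omega}$. Because $\bm{\xi}$ is a finite element function, the coercivity \eqref{equ:coer} applies to it; using Galerkin orthogonality to discard the contribution of $T\bm{f}-T_h\bm{f}$ and then the continuity \eqref{equ:cont} yields
\begin{equation*}
C_5\VERT\bm{\xi}\VERT^2\le a_h(\bm{\xi},\bm{\xi})=-a_h(\bm{\eta},\bm{\xi})\le C_4\VERT\bm{\eta}\VERT\,\VERT\bm{\xi}\VERT,
\end{equation*}
so $\VERT\bm{\xi}\VERT\lesssim\VERT\bm{\eta}\VERT$, and the triangle inequality gives \eqref{equ:energyerror}. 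I stress the split because \cref{thm:wellposed} establishes coercivity only on finite element functions, so it cannot be applied to $T\bm{f}-T_h\bm{f}$ directly.

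For the $L^2$ estimate \eqref{equ:l2error} I would run an Aubin--Nitsche duality argument. Setting $\bm{e}=T\bm{f}-T_h\bm{f}$ and letting $\bm{w}=T\bm{e}$ solve $a_h(\bm{w},\bm{v})=b(\bm{e},\bm{v})$ for all admissible $\bm{v}$, the Hermitian symmetry of $a_h$ gives $b(\bm{e},\bm{e})=a_h(\bm{e},\bm{w})$; subtracting $I_h\bm{w}\in V_{h,per}$ via Galerkin orthogonality and applying continuity and \eqref{equ:interperror} with regularity $\|\bm{w}\|_{2,\Omega^+\cup\Omega^-}\lesssim\|\bm{e}\|_{0,\Omega}$ yields
\begin{equation*}
b(\bm{e},\bm{e})=a_h(\bm{e},\bm{w}-I_h\bm{w})\le C_4\VERT\bm{e}\VERT\,\VERT\bm{w}-I_h\bm{w}\VERT\lesssim h\,\VERT\bm{e}\VERT\,\|\bm{e}\|_{0,\Omega}.
\end{equation*}
Since $\rho$ is bounded below, $\|\bm{e}\|_{0,\Omega}^2\lesssim b(\bm{e},\bm{e})$, so dividing gives $\|\bm{e}\|_{0,\Omega}\lesssim h\VERT\bm{e}\VERT$, and inserting the energy bound \eqref{equ:energyerror} produces the $h^2$ rate.

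The main obstacle is the piecewise $H^2$ elliptic regularity, which underpins both the interpolation step and the duality step and requires the interface to be sufficiently smooth; verifying or citing it in the high-contrast, general-geometry setting is the delicate point. A secondary technical issue, already flagged above, is that continuity and coercivity are available only for finite element (and finite-norm) functions, which forces the error to be decomposed into a finite element part and an interpolation part before coercivity can be invoked.
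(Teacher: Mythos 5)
Your proposal is correct and follows essentially the same route as the paper, whose proof is only a two-sentence sketch citing exactly the ingredients you use: Galerkin orthogonality, the continuity and coercivity of \cref{thm:wellposed}, and the interpolation estimate \eqref{equ:interperror} for the energy bound \eqref{equ:energyerror}, then the Aubin--Nitsche duality argument for the $L^2$ bound \eqref{equ:l2error}. Your write-up is in fact more careful than the paper's, since you make explicit both the piecewise $H^2$ regularity $\|T\bm{f}\|_{2,\Omega^+\cup\Omega^-}\lesssim\|\bm{f}\|_{0,\Omega}$ that the paper uses implicitly and the decomposition into an interpolation part and a discrete part, which is needed because coercivity is established only for finite element functions.
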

\begin{proof}
The error estimate \eqref{equ:energyerror} can be proved by combining orthogonality, continuity, and the coercivity  of the sesquilinear form $a_h(\cdot, \cdot)$  and the approximation property of the interpolation operator \eqref{equ:interp}.   The error estimate \eqref{equ:l2error} can be established using the Aubit-Nitsche' argument \cite{BrennerScott2008, Ciarlet2002}.
\end{proof}

As a direct consequence of  Theorem \cref{thm:solopapp},  it is straightforward to show that
\begin{equation} \label{equ:operatorapproximation}
	\|T-T_h\|_{\mathcal{L}(L^2(\Omega))}  \le C h^2.
\end{equation}

   Denote the resolvent set of the operator $T$ (or $T_h$) by $\sigma(T)$ (or $\sigma(T_h)$ and the spectrum set of the operator $T$ (or $T_h$ ) by $\rho(T)$ (or $\rho(T_h)$).
   Suppose $\mu$ is an eigenvalue of the compact operator $T$  with algebraic multiplicities $m$. Let $\mathcal{C}$ be a circle in the complex plane centered at $\zeta$  which is contained  in  resolvent set of $T$ and encloses  no other spectrum  points of $T$.
  When $h$ is sufficiently small,  $\mathcal{C}$  is also contained in the resolvent set of $T_h$.  We define the Reisz spectral projection associated with $T$ and $\mathcal{C}$   as   \begin{equation}
E = \frac{1}{2\pi\mathrm{i}} \int_{\mathcal{C} } (z-T)^{-1}dz.
\end{equation}
and the discrete analogue as
 \begin{equation}
E_h = \frac{1}{2\pi\mathrm{i}} \int_{\mathcal{C} } (z-T_h)^{-1}dz.
\end{equation}
 According to  \cite{BaOs1991},  $E$ is a projection onto the space of generalized eigenvectors associated with $\zeta$ and $T$.

   Then, the Babu\v{s}ka-Osborn theory\cite{BaOs1989,BaOs1991} tells us that
\begin{theorem}\label{thm:nopol}
Let $\zeta$ be an eigenvalue of $T$ with algebraically multiplicity $m$ and  $\mathcal{C}$ be the circle defined above. Then, for sufficiently
small $h$, there following statements hold.
\begin{enumerate}
	\item There are exactly $m$ eigenvalues
	 $\zeta_{h,1}, \cdots \zeta_{h,m}$ of $\mathcal{T}$ enclosed  in $\Gamma$. Furthermore, $\lim\limits_{h\rightarrow 0}\zeta_{h,j} = \zeta$.
	 \item $E_h$ is a onto projection to the direct sum of the spaces of eigenvectors corresponding to these eigenvalues  $\zeta_{h,1}, \cdots \zeta_{h,m}$ of $\mathcal{T}$.
	 \item There is a constant C independent of h such that
 \begin{equation}
 	\hat{\delta}(R(E), R(E_h)) \le C \|(T-T_h)|_{R(E_h)}\|,
 \end{equation}
 where $\hat{\delta}(R(E), R(E_h))$ is the gap between the range of $E$ and the range of $E_h$ and    $(T-T_h)|_{R(E_h)}$ is the restriction of
 $T-T_h$ to $R(E)$.
\end{enumerate}
\end{theorem}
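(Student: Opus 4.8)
The plan is to treat this as the abstract Babu\v{s}ka--Osborn spectral approximation theorem, whose only analytic inputs are that $T$ and $T_h$ are compact and self-adjoint on $L^2(\Omega)$ and that they converge in operator norm at the rate \eqref{equ:operatorapproximation}, i.e.\ $\|T-T_h\|_{\mathcal{L}(L^2(\Omega))}\le Ch^2$. Everything then follows from the Riesz projection calculus around the contour $\mathcal{C}$, and I would organize the argument in three steps: (i) uniform invertibility of $z-T_h$ on $\mathcal{C}$; (ii) norm convergence of the spectral projections $E_h\to E$, which yields parts 1 and 2; and (iii) the gap estimate of part 3.

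First I would establish norm resolvent convergence on the contour. Since $\mathcal{C}$ is a compact subset of the resolvent set of $T$, the quantity $M:=\sup_{z\in\mathcal{C}}\|(z-T)^{-1}\|$ is finite. Writing $z-T_h=(z-T)\bigl[I-(z-T)^{-1}(T_h-T)\bigr]$ and using $\|(z-T)^{-1}(T_h-T)\|\le MCh^2<1$ for $h$ small enough, a Neumann series shows that $(z-T_h)^{-1}$ exists and is bounded uniformly in $z\in\mathcal{C}$. In particular $\mathcal{C}$ lies in the resolvent set of $T_h$ for $h$ small, so $E_h$ is well defined.

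Next, from the resolvent identity $(z-T)^{-1}-(z-T_h)^{-1}=(z-T_h)^{-1}(T_h-T)(z-T)^{-1}$, integrating over $\mathcal{C}$ and using the uniform bounds gives $\|E-E_h\|\le C\,|\mathcal{C}|\,M^2\,\|T-T_h\|\le Ch^2\to0$. Once $\|E-E_h\|<1$, the standard perturbation lemma for projections forces $\dim R(E_h)=\dim R(E)=m$; this is exactly parts 1 and 2, namely that $\mathcal{C}$ encloses precisely $m$ discrete eigenvalues $\zeta_{h,1},\dots,\zeta_{h,m}$ of $T_h$ and that $E_h$ projects onto the corresponding eigenspaces. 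Applying this with contours $\mathcal{C}$ of arbitrarily small radius about $\zeta$ yields $\lim_{h\to0}\zeta_{h,j}=\zeta$.

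Finally, for the gap estimate of part 3, I would take $\bm\phi\in R(E)$ with $\|\bm\phi\|=1$ and use $E\bm\phi=\bm\phi$ to write $\bm\phi-E_h\bm\phi=(E-E_h)\bm\phi=\frac{1}{2\pi\mathrm{i}}\int_{\mathcal{C}}(z-T_h)^{-1}(T_h-T)(z-T)^{-1}\bm\phi\,dz$. The key observation is that $R(E)$ is invariant under $(z-T)^{-1}$, so $(z-T)^{-1}\bm\phi\in R(E)$ and the factor $T_h-T$ only ever acts on $R(E)$; bounding the resolvents uniformly then yields $\mathrm{dist}(\bm\phi,R(E_h))\le C\|(T-T_h)|_{R(E)}\|$, and, since $R(E)$ and $R(E_h)$ share the same finite dimension $m$ from the previous step, the one-sided estimate upgrades to the symmetric gap $\hat{\delta}(R(E),R(E_h))\le C\|(T-T_h)|_{R(E)}\|$. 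The main obstacle is this last step: one must keep careful track of which resolvent acts on which invariant subspace so that the restriction to $R(E)$ can legitimately be extracted from the norm, rather than the full operator norm of $T-T_h$; the self-adjointness of $T$ and $T_h$, which makes the ascent equal to one so that generalized eigenvectors reduce to ordinary eigenvectors, is what keeps this bookkeeping clean.
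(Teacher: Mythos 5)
The paper does not actually prove this theorem: it is quoted directly from the Babu\v{s}ka--Osborn references \cite{BaOs1989,BaOs1991}, with the operator-norm bound \eqref{equ:operatorapproximation} serving as the hypothesis that makes the abstract theory applicable. Your proposal instead reconstructs the abstract theorem itself, and it does so correctly, along the standard lines of those references: Neumann-series inversion of $z-T_h$ uniformly on the compact contour $\mathcal{C}$; the second resolvent identity integrated over $\mathcal{C}$ to get $\|E-E_h\|\le C\|T-T_h\|\to 0$; the rank-equality lemma for projections with $\|E-E_h\|<1$, combined with self-adjointness of $T_h$ (ascent one, so generalized eigenvectors are genuine eigenvectors), which yields parts 1 and 2, with the shrinking-contour argument correctly giving $\zeta_{h,j}\to\zeta$; and the commutation of $(z-T)^{-1}$ with $E$, hence invariance of $R(E)$, to localize the perturbation and obtain part 3. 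One point of comparison worth recording: your route genuinely requires norm convergence $\|T-T_h\|_{\mathcal{L}(L^2(\Omega))}\to 0$, which this paper supplies via \eqref{equ:operatorapproximation}, whereas the full Babu\v{s}ka--Osborn theory also covers the weaker setting of collectively compact, pointwise-convergent families; so your proof is slightly less general than the cited theory but entirely adequate here. Two small repairs: the resolvent identity should read $(z-T)^{-1}-(z-T_h)^{-1}=(z-T_h)^{-1}(T-T_h)(z-T)^{-1}$ (your factor $T_h-T$ has the wrong sign, harmless for the norm estimates); and the upgrade from the one-sided deviation $\delta\bigl(R(E),R(E_h)\bigr)$ to the symmetric gap $\hat{\delta}$ should explicitly invoke the standard lemma that for subspaces of equal finite dimension $\delta\bigl(R(E_h),R(E)\bigr)\le \delta\bigl(R(E),R(E_h)\bigr)\bigl[1-\delta\bigl(R(E),R(E_h)\bigr)\bigr]^{-1}$, valid once $h$ is small enough that the quantity in brackets is positive. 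Incidentally, your derivation confirms that the restriction in part 3 should be to $R(E)$, as the theorem's prose states, rather than to $R(E_h)$ as the subscript in its displayed formula misprints.
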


Now, we are in the position to present our main results on the numerical approximation of the eigenvalues and eigenfunctions:
\begin{theorem} \label{thm:eigapp}
Let $\mu_h$ be an eigenvalue of $T_h$ satisfying  $\lim\limits_{h\rightarrow 0} \zeta_h = \zeta$.  Let $\bm{u}_h$ be a unit eigenvector of $T_h$
 corresponding to the eigenvalue $\zeta_h$.  Then there exists a unit eigenvector $\bm{u}\in R(E)$ such that the following estimates hold
\begin{align}
 &\|\bm{u}-\bm{u}_h\|_{0, \Omega} \le  Ch^2 \|\bm{u}\|_{2, \Omega^+\cup\Omega^-},  \label{equ:efunerr}\\
 &  |\zeta - \zeta_h| \le Ch^2 , \label{equ:evalerr}\\
 & |\omega^2 - \omega^2_h| \le Ch^2 . \label{equ:evalueerr}
\end{align}

\end{theorem}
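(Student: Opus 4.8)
The plan is to obtain all three bounds as corollaries of the operator-norm estimate \eqref{equ:operatorapproximation} together with the abstract spectral approximation results of \cref{thm:nopol}, using throughout that $T$ and $T_h$ are self-adjoint and compact on $L^2(\Omega)$. The three quantities to control are the $L^2$ error of the eigenfunction, the error of the eigenvalue $\zeta$ of the solution operator, and the error of the original eigenvalue $\omega^2 = \zeta^{-1}$.

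First I would treat the eigenfunction estimate \eqref{equ:efunerr}. The third assertion of \cref{thm:nopol} bounds the gap between the exact and discrete eigenspaces by $\hat{\delta}(R(E), R(E_h)) \le C\|(T-T_h)|_{R(E_h)}\|$. Since $R(E_h) \subset L^2(\Omega)$, the restricted norm is dominated by the full operator norm, so \eqref{equ:operatorapproximation} yields $\hat{\delta}(R(E), R(E_h)) \le Ch^2$. By the definition of the gap, for the given unit discrete eigenvector $\bm{u}_h \in R(E_h)$ there is an element $\bm{u} \in R(E)$ with $\|\bm{u} - \bm{u}_h\|_{0,\Omega} \le \hat{\delta}(R(E),R(E_h)) \le Ch^2$; since $\|\bm{u}_h\|_{0,\Omega}=1$, normalizing $\bm{u}$ to unit length costs only a harmless constant. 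As $\bm{u}$ is an exact eigenfunction of the interface problem \eqref{equ:neweigen}, interface elliptic regularity places it in $H^2(\Omega^+\cup\Omega^-)$ with $\|\bm{u}\|_{2,\Omega^+\cup\Omega^-}\ge \|\bm{u}\|_{0,\Omega}=1$, so the bound can be recast in the stated intrinsic form $Ch^2\|\bm{u}\|_{2,\Omega^+\cup\Omega^-}$.

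Next I would establish the eigenvalue bounds. Because $T$ and $T_h$ are self-adjoint compact operators, their eigenvalues are stable under operator-norm perturbation: by the min-max principle (equivalently, the eigenvalue assertion of the Babu\v{s}ka--Osborn theorem, whose right-hand side is controlled by $\|(T-T_h)|_{R(E)}\|$) one has $|\zeta - \zeta_h| \le \|T-T_h\|_{\mathcal{L}(L^2(\Omega))} \le Ch^2$, which is \eqref{equ:evalerr}. To pass to \eqref{equ:evalueerr} I use $\omega^2 = \zeta^{-1}$ and $\omega_h^2 = \zeta_h^{-1}$, so that $|\omega^2 - \omega_h^2| = |\zeta - \zeta_h|/(\zeta \zeta_h)$. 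By the first assertion of \cref{thm:nopol}, $\zeta_h \to \zeta \neq 0$, hence $(\zeta\zeta_h)^{-1}$ is bounded uniformly for $h$ small, and \eqref{equ:evalueerr} follows.

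The step I expect to require the most care is not the spectral machinery itself but the quadratic convergence rate feeding into it: everything above reduces to the $L^2$ operator bound \eqref{equ:operatorapproximation}, i.e. to the $O(h^2)$ estimate \eqref{equ:l2error} of \cref{thm:solopapp}, which rests on the Aubin--Nitsche duality argument and therefore on $H^2$-regularity of the dual interface problem together with the $O(h^2)$ approximation of its solution in the mesh-dependent norm. Once that doubling of the energy rate \eqref{equ:energyerror} is in hand, the eigenvalue and eigenfunction estimates follow at rate $h^2$ with no further loss, and the only remaining bookkeeping is the normalization of $\bm{u}$ and the non-degeneracy $\zeta\neq 0$ used in the conversion to $\omega^2$.
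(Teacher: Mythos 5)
Your proposal is correct, and for the eigenfunction bound \eqref{equ:efunerr} it is essentially the paper's argument: the paper invokes the Babu\v{s}ka--Osborn eigenvector theorem, writing $\|\bm{u}-\bm{u}_h\|_{0,\Omega}\le \|(T-T_h)|_{R(E)}\|$ and then applying \eqref{equ:operatorapproximation}, which is the same spectral-projection/gap mechanism you use (your normalization step and the observation $\|\bm{u}\|_{2,\Omega^+\cup\Omega^-}\ge\|\bm{u}\|_{0,\Omega}=1$ are harmless bookkeeping; note that self-adjointness is what lets you promote an arbitrary element of $R(E)$ to a genuine eigenvector, which you use implicitly). The genuine divergence is in \eqref{equ:evalerr}. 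The paper does not use a Weyl-type perturbation bound; it applies the Babu\v{s}ka--Osborn eigenvalue estimate \eqref{equ:eigapp} and then, rather than crudely bounding $|((T-T_h)\bm{u}_j,\bm{u}_k)|$ by the operator norm, it converts this term via the definitions \eqref{equ:soloperator}, \eqref{equ:dissoloperator}, the Hermitian symmetry of $a_h$, and the Galerkin orthogonality \eqref{equ:gal} into $a_h(T\bm{u}_j-T_h\bm{u}_j,\,T\bm{u}_k-T_h\bm{u}_k)$, which continuity \eqref{equ:cont} and the energy estimate \eqref{equ:energyerror} bound by $Ch\cdot Ch$. Your route --- self-adjoint spectral stability together with the $L^2$ operator bound \eqref{equ:operatorapproximation} --- also yields $O(h^2)$ and is shorter, but it leans entirely on the duality estimate \eqref{equ:l2error}, and two points should be made explicit: $T$ and $T_h$ are self-adjoint with respect to the $\rho$-weighted inner product $b(\cdot,\cdot)$ rather than the plain $L^2$ one (the norms are equivalent since $0<\rho^-\le\rho\le\rho^+$, so only the constant changes), and identifying $\zeta$ as the spectral point of $T$ nearest to $\zeta_h$ requires the isolation of $\zeta$ combined with $\zeta_h\to\zeta$, which item 1 of \cref{thm:nopol} supplies. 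The trade-off is instructive: the paper's doubling identity extracts the $O(h^2)$ eigenvalue rate from the $O(h)$ energy estimate alone, so it would survive in settings (reduced interface regularity, non-self-adjoint variants) where the dual problem is not $H^2$-regular and your operator-norm route would lose an order; your argument is the more economical one whenever \eqref{equ:operatorapproximation} holds at full second order, as it does here. Finally, your treatment of \eqref{equ:evalueerr} via $|\omega^2-\omega_h^2|=|\zeta-\zeta_h|/(\zeta\zeta_h)$ with $(\zeta\zeta_h)^{-1}$ uniformly bounded for small $h$ is in fact more careful than the paper's one-line remark that the estimate ``follows directly.''
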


\begin{proof}
First,  we consider the approximation capability in the eigenfunction. To do this, we approximation theory of abstract compact operator in \cite{BaOs1991}.  Theorem ~7.4 in \cite{BaOs1989} implies that
\begin{align*}
\|\bm{u}-\bm{u}_h\|_{0, \Omega} & \le \|(T-T_h)|_{R(E)}\|_{0, \Omega} = \sup_{\substack{ \bm{v}\in R(E)\\
 \VERT \bm{v}\VERT_h = 1}} \|T \bm{v} - T_h \bm{v}\|_{0, \Omega}
 \le Ch^2 \|\bm{u}\|_{2, \Omega^+\cup\Omega^-},
\end{align*}
where we have used the approximation property \eqref{equ:operatorapproximation}.  This completes the proof  \eqref{equ:efunerr}.

We then turn to the estimate  \eqref{equ:evalerr}. Without loss of generality, suppose  $\bm{u}_1$, \ldots, $\bm{u}_m$ form the unit  basis for $R(E)$. Again, the Theorem ~7.3 in \cite{BaOs1991}  implies that there exists a constant $C$ such that
 \begin{equation}\label{equ:eigapp}
|\zeta - \zeta_h| \le C \sum_{j,k = 1}^m |((T-T_h)\bm{u}_j, \bm{u}_k)| + C\|(T-T_h)|_{R(E)}\|_{0, \Omega}^2.
\end{equation}
From \eqref{equ:operatorapproximation},  the second term in \eqref{equ:eigapp} is bounded  above by $\mathcal{O}(h^2)$.
 We only need to estimate  the first term in \eqref{equ:eigapp}.
 By   \eqref{equ:soloperator}, \eqref{equ:dissoloperator} and
 the Galerkin orthogonality \eqref{equ:gal}, we obtain that
 \begin{equation}
\begin{split}
  ((T-T_h)\bm{u}_j, \bm{u}_k) =& (\bm{u}_j, (T-T_h)\bm{u}_k)\\
  = &a_h(T\bm{u}_j, T\bm{u}_k - T_h\bm{u}_k)\\
  = &a_h(T\bm{u}_j-T_h\bm{u}_j, T\bm{u}_k - T_h\bm{u}_k) + a_h(T_h\bm{u}_j, T\bm{u}_k - T_h\bm{u}_k)\\
    = &a_h(T\bm{u}_j-T_h\bm{u}_j, T\bm{u}_k - T_h\bm{u}_k) + \overline{a_h(T\bm{u}_k - T_h\bm{u}_k, T_h\bm{u}_j)}\\
    = &a_h(T\bm{u}_j-T_h\bm{u}_j, T\bm{u}_k - T_h\bm{u}_k)\\
    \le &C\VERT T\bm{u}_j-T_h\bm{u}_j\VERT_h
    \VERT T\bm{u}_k-T_h\bm{u}_k\VERT_h\\
    \le& Ch^2\|\bm{u}_j\|_{2, \Omega^+\cup\Omega^-}\|\bm{u}_k\|_{2, \Omega^+\cup\Omega^-}\\
    \le& Ch^2.
\end{split}
\end{equation}
Combining the above two estimates gives the optimal approximation property of the eigenvalue \eqref{equ:evalerr}.

To show the last estimate \eqref{equ:eigenvalue}, we notice the fact $\zeta^{-1}=\omega^{2}$ (or $\zeta_h^{-1}=\omega_h^{2})$ and then it follows directly from \eqref{equ:evalerr}.
\end{proof}

\begin{table}[h!]
\centering
\caption{Material constants of aulminium/aluminium/epoxy}\label{tab:para}
\resizebox{\textwidth}{!}{
	\begin{tabular}{|l|l|l|l|}
\hline
Parameters & Aurum ($\Omega^-$) & Aluminium ($\Omega^-$)
 & Epoxy ($\Omega^+$)\\ \hline
Density $\rho$ ($kg/ m^3$)  & $19500$ & $2730$ & $1180$ \\
Lame's constant $\lambda$ ($N/ m^2$) & $4.23\times 10^{10}$ &
 $4.59\times 10^{10}$ &  $4.23\times 10^{9}$ \\
 Shear modulus $\mu$ ($N/ m^2$) & $2.99\times 10^{10}$ &
 $2.70\times 10^{10}$ &  $1.57\times 10^{9}$ \\ \hline
\end{tabular}}
\end{table}

\section{Numerical Examples}
\label{sec:example}
In this section, we shall use several benchmark numerical examples to
validate our theoretical results and illustrate the efficiency of the proposed unfitted numerical method in the computation of the band structure of the phononic crystal.  In the following tests, we shall consider the aurum/epoxy phononic crystal and the aluminium/epoxy phononic crystal as in \cite{WZLS2019}. The aurum (Au) scatters or the aluminium (Al) scatterers are embedded in the epoxy matrix.   Their material constants are documented in the \cref{tab:para}.
The transverse wave speed $c^s$ is  defined as
\begin{equation}\label{equ:speed}
	c^s = \sqrt{\mu^s/\rho^s}
\end{equation}
 for $s=\pm$. 
In all the following tests, the length of unit cell  $a$ is taken as 1.

To check the convergence rate for the unfitted Nitsche's method \eqref{equ:nit},  we shall  approximate the convergence rate of the exact error  by the rate of the  following the relative errors
\begin{equation*}
e_i =  \frac{\left|\omega_{i, h_j}^2 - \omega_{i, h_{j+1}}^2\right|}{ \omega_{i, h_j}^2},
\end{equation*}
where  $h_j$ is the mesh size of $j$th level meshes and $\omega_{i,h_j+1}^2$ is the $i$th eigenvalue on the $j$th level mesh.

\subsection{Square lattice with circular inclusion}
In the first numerical example,  consider the square lattice with circular inclusion as shown in \cref{fig:circle}. As was mentioned at the beginning of this section,  the inclusion scatter is either aurum or aluminium and the material constants are listed in \cref{tab:para}.   The radius of the circular material interface is $0.25$.

\begin{figure}[!h]
   \centering
   \subcaptionbox{\label{fig:circle_au}}
  {\includegraphics[width=0.47\textwidth]{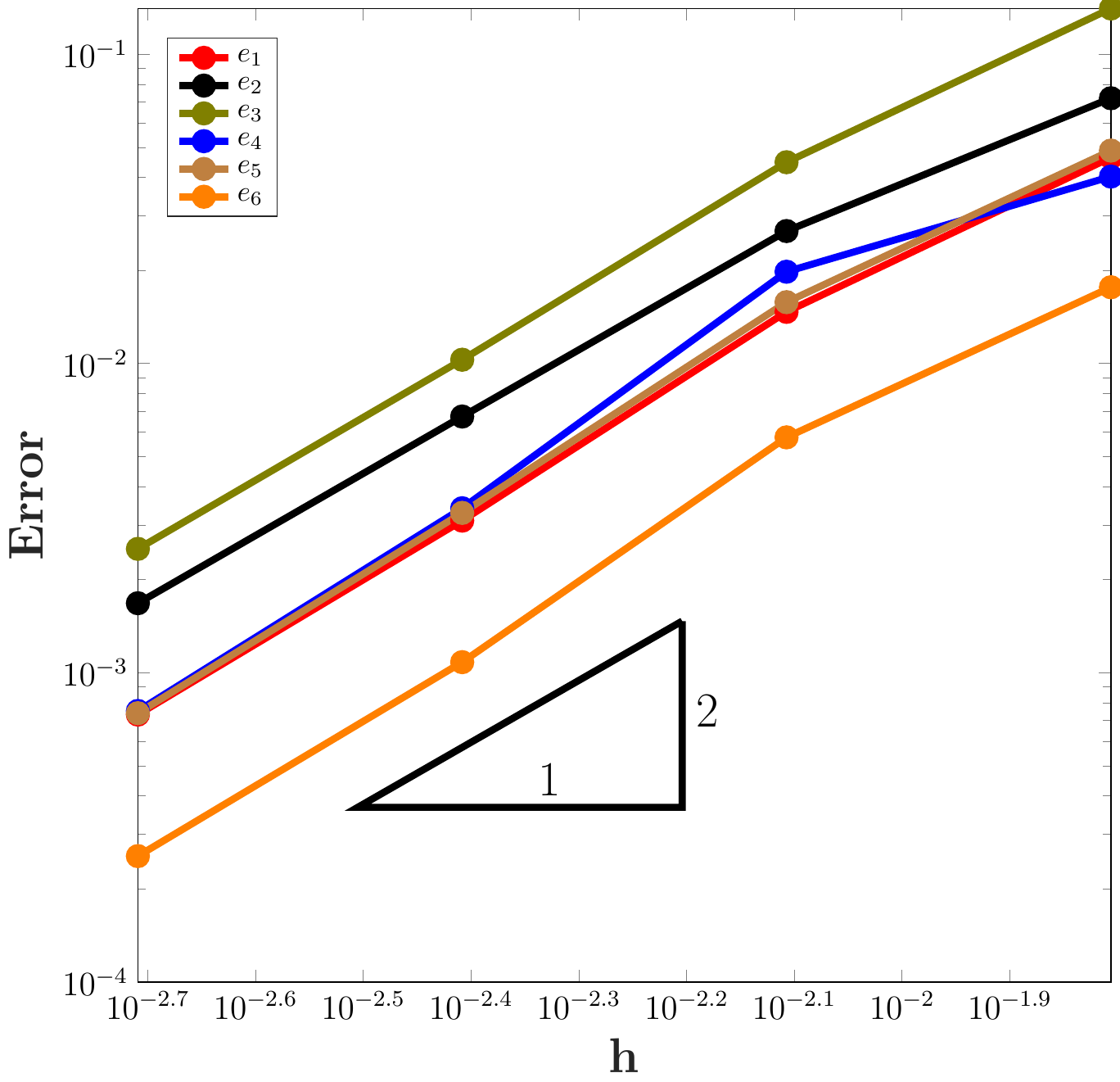}}
  \vspace{0.1in}
  \subcaptionbox{\label{fig:circle_al}}
   {\includegraphics[width=0.47\textwidth]{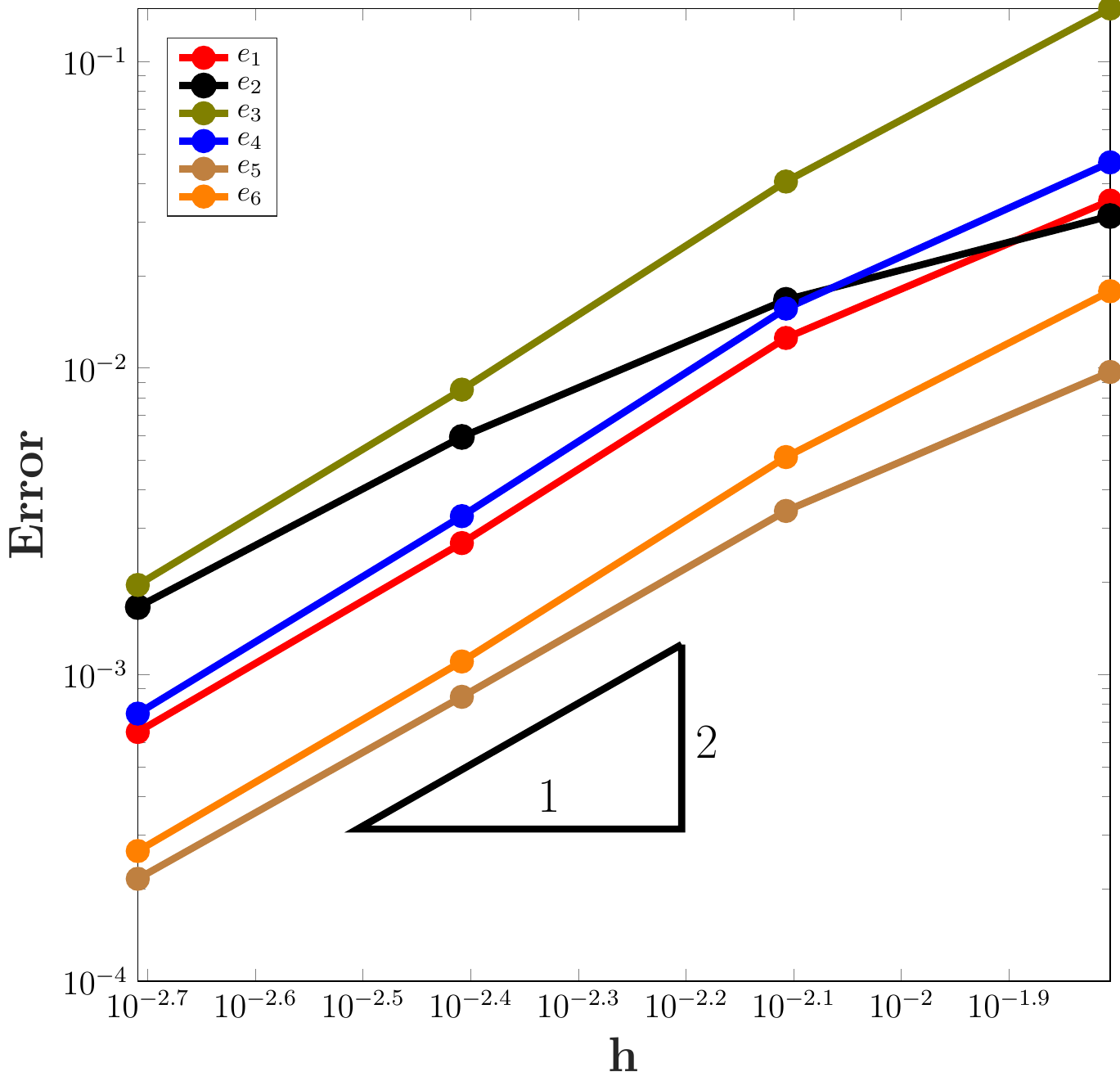}}
   \caption{Convergence rate of phononic crystal with circular   inclusion. (a): aurum/epoxy phononic crystal; (b): aulminium/epoxy phononic crystal.}
   \label{fig:circle_cr}
\end{figure}

\begin{figure}[!h]
   \centering
  \includegraphics[width=0.65\textwidth]{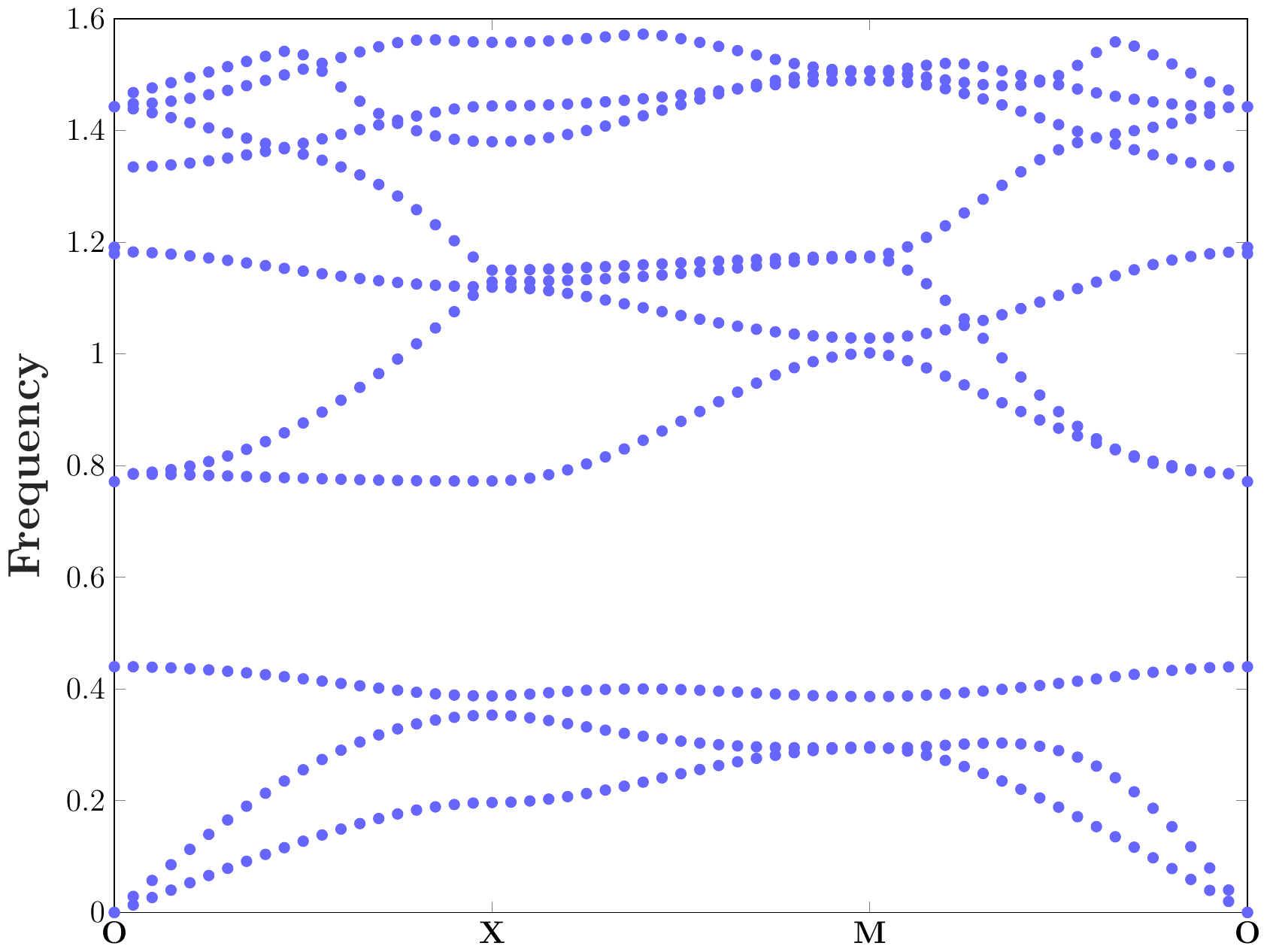}
   \caption{Band structure of aurum/epoxy phononic crystal with circular inclusion}
   \label{fig:bandcircle_1}
\end{figure}

Firstly, we test the convergence for unfitted Nitsche's method \eqref{equ:nit}.  We take  the quasi-momentum
$\bm{k} = (\pi, \pi)$.  The convergence history of the relative numerical errors is plotted in \cref{fig:circle_cr}.  Looking at \cref{fig:circle_cr}, it is apparent that the relative errors decay quadratically for both types of phononic crystals. The second-order convergence numerical results consist with the theoretical convergence rate predicted by \cref{thm:eigapp}.  Note the jump ratios of the material parameters are about 19  for the aurum/epoxy phononic crystal and about 17 for the alumina/epoxy phononic crystal. Despite the heterogeneous nature of the materials,  the proposed numerical method is theoretically and numerically proven to achieve the optimal convergence rate, which shows its potential in the efficient computation of the band structure.

\begin{figure}[!h]
   \centering
  \includegraphics[width=0.65\textwidth]{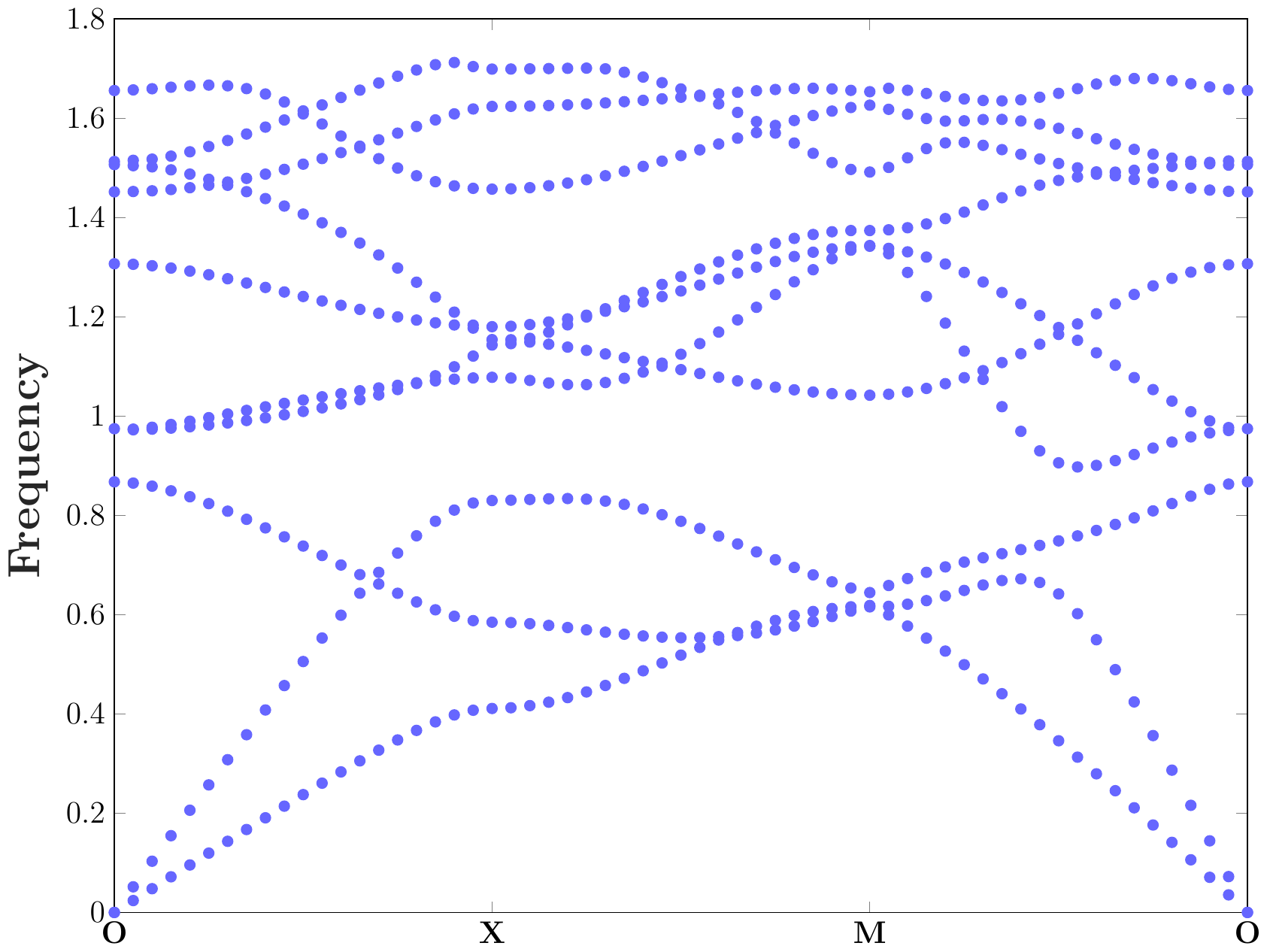}
   \caption{Band structure of aulumin/epoxy phononic crystal with circular inclusion}
   \label{fig:bandcircle_2}
\end{figure}

Now turn to the numerical computation of the band structure for the aurum/epoxy phononic crystal.  In the computation,  the mesh size is chosen to be $1/64$ and the quasi-momentum $k$ is taken on the boundary of the irreducible Brillouin zone.   In \cref{fig:bandcircle_1}, we plot the first ten normalized frequency along the direction O-X-M-O.
The normalized frequency is defined as $\omega a/(2\pi c^-)$ where $c^-$ is the wave speed of the scatters defined in \eqref{equ:speed}.
From the graph, we can see that there are  one small band-gap opens between the second eigencurve and the third eigencurve
and one relatively large band-gap opens between the third eigencurve and the fourth eigencurve.

Then, we focus on the computation of band structure of alumina/epoxy phononic crystal. The computational setup is the same as aurum/epoxy phononic crystal. The first ten normalized frequency is presented in \cref{fig:bandcircle_2}.   The most interesting aspect of this graph is that we only observe one relatively small band-gap between the third and fourth eigencurves. In contrast, we observed two band-gaps in the aurum/epoxy phononic crystal.

\begin{figure}[!h]
   \centering
   \subcaptionbox{\label{fig:flower_lattice}}
  {\includegraphics[width=0.3106\textwidth]{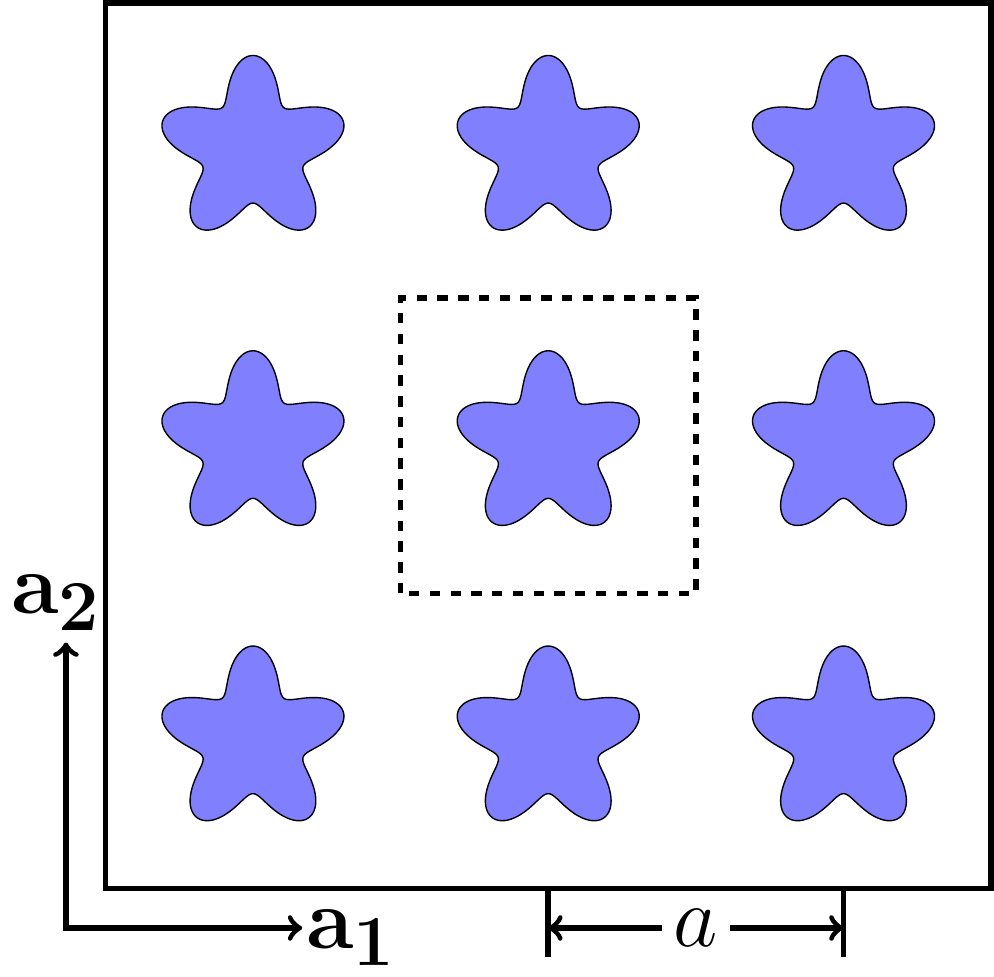}}
  \vspace{0.1in}
  \subcaptionbox{\label{fig:flow_unitcell}}
   {\includegraphics[width=0.275\textwidth]{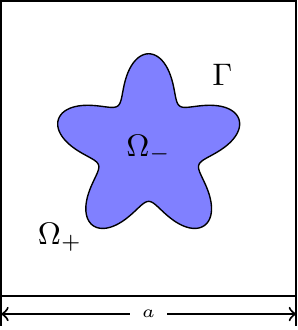}}
     \vspace{0.1in}
  \subcaptionbox{\label{fig:flower_brillouinzone}}
  {\includegraphics[width=0.306\textwidth]{fig/bz}}
   \caption{ Bravais lattice with flower shape inclusion.  (a): 2D square lattice; (b): the unit cell;
   (c): the First Brillouin Zone}
   \label{fig:flower}
\end{figure}

\subsection{Square lattice with flower shape inclusion}
Our second numerical example is  aurum/epoxy phononic crystal with flower shape inclusion. The phononic crystal is illustrated \cref{fig:flower}.  We conduct the computation in the fundamental cell $\Omega$ with length $a=1$, see \cref{fig:flow_unitcell}. The flower material interface curve in polar coordinate is given by
\begin{equation}
	r = \frac{1}{2} + \frac{\sin(5\theta)}{7},
\end{equation}
which contains both convex and concave parts.

Firstly, we verify the established theoretical results.  \cref{fig:flower_error} shows the convergence curve of the relative error for the first six eigenvalues. What stands out in the figure is that the relative error converges optimally at the rate of $\mathcal{O}(h^2)$ as predicted by \cref{thm:nopol}. The numerical results demonstrate the flexibility of the he proposed method in handling interfaces with complicate geometries.

\begin{figure}[!h]
   \centering
  \includegraphics[width=0.65\textwidth]{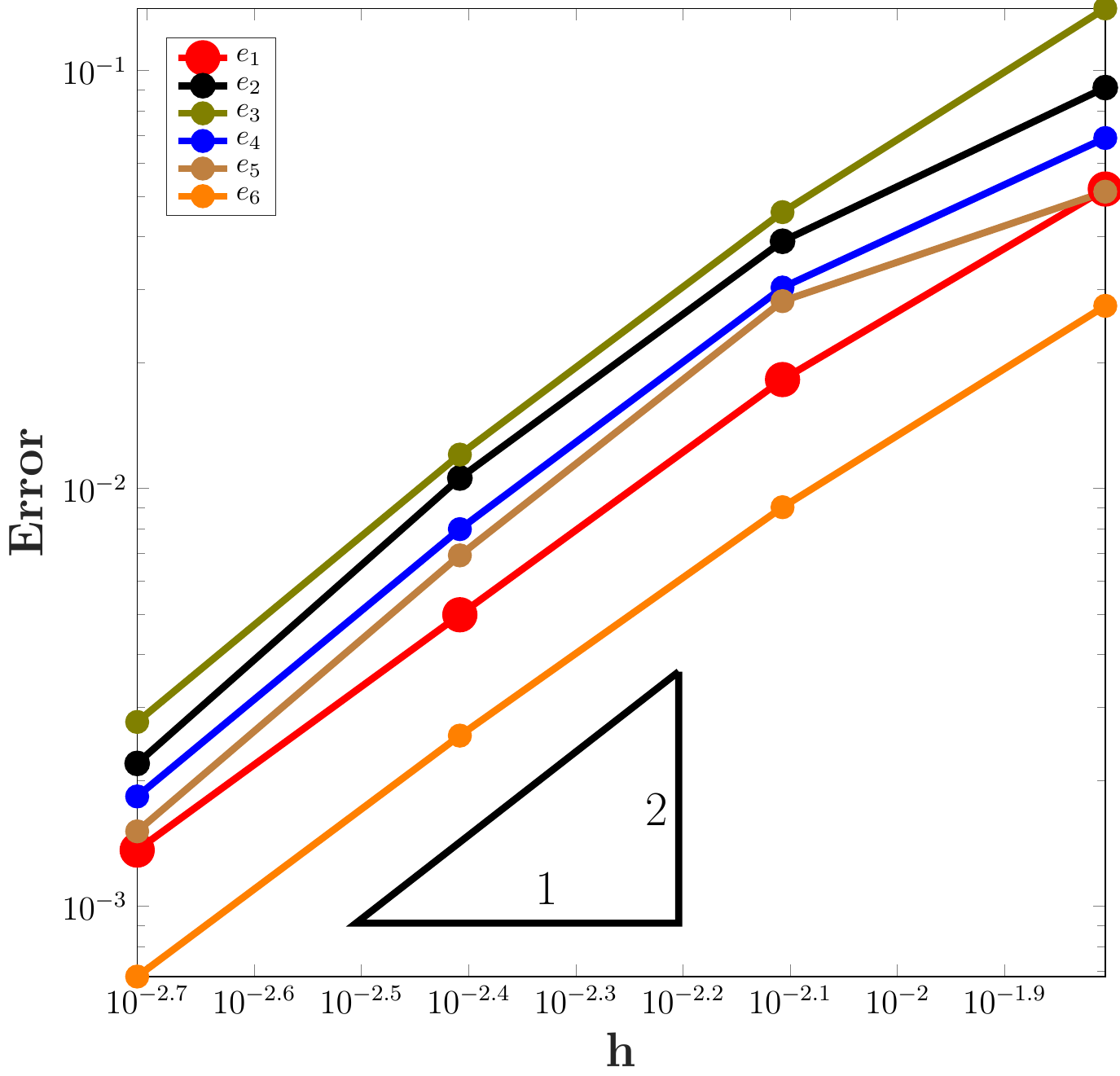}
   \caption{Convergence rate of aurum/epoxy phononic crystal with flower shape  inclusion }
   \label{fig:flower_error}
\end{figure}

\begin{figure}[!h]
   \centering
  \includegraphics[width=0.65\textwidth]{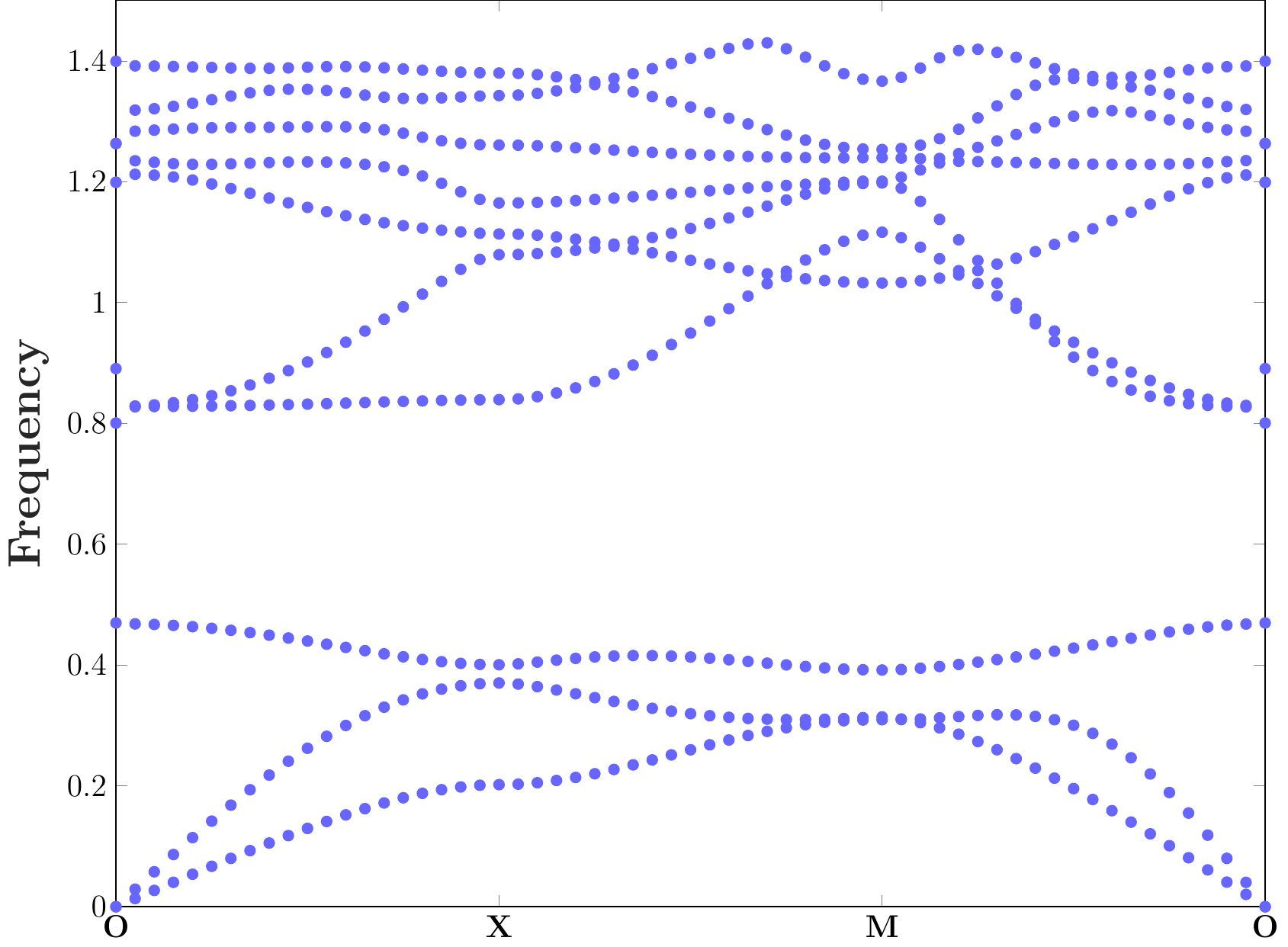}
   \caption{Band structure of aurum/epoxy phononic crystal with flower shape  inclusion}
   \label{fig:flower_band}
\end{figure}

Let us now turn to look the computation of the band structure. In this test, we take the mesh size $h = \frac{1}{64}$.  The first ten normalized frequency along O-M-X-O is plotted in \cref{fig:flower_band}.  Similar to aurum/epoxy phononic crystal with circular inclusion,  there are two band-gaps open: the relatively smaller band-gap is between the second and the third eigencurves; the relative larger band-gap is between the third and the fourth eigencurves.  An inspection of the data in \cref{fig:flower_band} reveals that the band-gap is relatively larger than the circular inclusion case.

\section{Conclusions}
\label{sec:conclusions}
In this paper,  a new finite element method for computing the band structure of phononic crystals with general material interfaces is proposed. To handle the quasi-periodic boundary condition, we transform the equation into an equivalent interface eigenvalue problem with periodic boundary conditions by applying the Floquet-Bloch transform.  The distinguishing feature of the proposed method is that it does not require the background mesh to fit the material interface which avoids the heavy burden of generating a body-fitted mesh and simplifies the impose of periodic boundary condition. Furthermore, the performance of the proposed method is theoretically founded. We show the well-posedness of the proposed method by using a delicate argument of the trace inequality.  With the aid of the Babu\v{s}ka-Osborn theory, we prove the proposed method achieves the optimal convergence result at the presence of material interfaces. The theoretical convergence rate is validated by two realistic numerical examples.  We also demonstrate the capability of the proposed methods in the computation of band structure without fitting the material interface.


\section*{Acknowledgment}
H.G. was partially supported by Andrew Sisson Fund of the University of Melbourne, X.Y. was partially supported by the NSF grant  DMS-1818592, and Y.Z. was partially supported by NSFC grant 11871299.

\bibliographystyle{siamplain}
\bibliography{references}

\begin{thebibliography}{10}

\bibitem{AdTr2020}
{\sc S.~C. Aduloju and T.~J. Truster}, {\em A primal formulation for imposing
  periodic boundary conditions on conforming and nonconforming meshes}, Comput.
  Methods Appl. Mech. Engrg., 359 (2020), pp.~112663, 29.

\bibitem{AFKRYZ2018}
{\sc H.~Ammari, B.~Fitzpatrick, H.~Kang, M.~Ruiz, S.~Yu, and H.~Zhang}, {\em
  Mathematical and computational methods in photonics and phononics}, vol.~235
  of Mathematical Surveys and Monographs, American Mathematical Society,
  Providence, RI, 2018.

\bibitem{AmKL2009}
{\sc H.~Ammari, H.~Kang, and H.~Lee}, {\em Asymptotic analysis of high-contrast
  phononic crystals and a criterion for the band-gap opening}, Arch. Ration.
  Mech. Anal., 193 (2009), pp.~679--714.

\bibitem{ALZ2019}
{\sc H.~Ammari, H.~Lee, and H.~Zhang}, {\em Bloch waves in bubbly crystal near
  the first band gap: a high-frequency homogenization approach}, SIAM J. Math.
  Anal., 51 (2019), pp.~45--59.

\bibitem{BaOs1989}
{\sc I.~Babu\v{s}ka and J.~E. Osborn}, {\em Finite element-{G}alerkin
  approximation of the eigenvalues and eigenvectors of selfadjoint problems},
  Math. Comp., 52 (1989), pp.~275--297.

\bibitem{BaOs1991}
{\sc I.~Babu\v{s}ka and J.~E. Osborn}, {\em Eigenvalue problems}, in Handbook
  of numerical analysis, {V}ol. {II}, Handb. Numer. Anal., II, North-Holland,
  Amsterdam, 1991, pp.~641--787.

\bibitem{BrennerScott2008}
{\sc S.~C. Brenner and L.~R. Scott}, {\em The mathematical theory of finite
  element methods}, vol.~15 of Texts in Applied Mathematics, Springer, New
  York, third~ed., 2008.

\bibitem{Burm2010}
{\sc E.~Burman}, {\em Ghost penalty}, C. R. Math. Acad. Sci. Paris, 348 (2010),
  pp.~1217--1220.

\bibitem{BCHLM2015}
{\sc E.~Burman, S.~Claus, P.~Hansbo, M.~G. Larson, and A.~Massing}, {\em
  Cut{FEM}: discretizing geometry and partial differential equations},
  Internat. J. Numer. Methods Engrg., 104 (2015), pp.~472--501.

\bibitem{CaHL2004}
{\sc Y.~Cao, Z.~Hou, and Y.~Liu}, {\em Finite difference time domain method for
  band-structure calculations of two-dimensional phononic crystals}, Solid
  State Communications, 132 (2004), pp.~539 -- 543.

\bibitem{CaRR2016}
{\sc F.~Casadei, J.~Rimoli, and M.~Ruzzene}, {\em Multiscale finite element
  analysis of wave propagation in periodic solids}, Finite Elements in Analysis
  and Design, 108 (2016), pp.~81 -- 95.

\bibitem{ChZo1998}
{\sc Z.~Chen and J.~Zou}, {\em Finite element methods and their convergence for
  elliptic and parabolic interface problems}, Numer. Math., 79 (1998),
  pp.~175--202.

\bibitem{Ciarlet2002}
{\sc P.~G. Ciarlet}, {\em The finite element method for elliptic problems},
  vol.~40 of Classics in Applied Mathematics, Society for Industrial and
  Applied Mathematics (SIAM), Philadelphia, PA, 2002.
\newblock Reprint of the 1978 original [North-Holland, Amsterdam; MR0520174 (58
  \#25001)].

\bibitem{CoMa2020}
{\sc C.~Comi and J.-J. Marigo}, {\em Homogenization {A}pproach and
  {B}loch-{F}loquet {T}heory for {B}and-{G}ap {P}rediction in 2{D} {L}ocally
  {R}esonant {M}etamaterials}, J. Elasticity, 139 (2020), pp.~61--90.

\bibitem{EcSi1994}
{\sc E.~N. Economou and M.~Sigalas}, {\em Stop bands for elastic waves in
  periodic composite materials}, The Journal of the Acoustical Society of
  America, 95 (1994), pp.~1734--1740.

\bibitem{Evans2008}
{\sc L.~C. Evans}, {\em Partial differential equations}, vol.~19 of Graduate
  Studies in Mathematics, American Mathematical Society, Providence, RI,
  second~ed., 2010.

\bibitem{GuYa2018}
{\sc H.~Guo and X.~Yang}, {\em Gradient recovery for elliptic interface
  problem: {III}. {N}itsche's method}, J. Comput. Phys., 356 (2018),
  pp.~46--63.

\bibitem{GuYZ2019}
{\sc H.~{Guo}, X.~{Yang}, and Y.~{Zhu}}, {\em {Unfitted Nitsche's method for
  computing wave modes in topological materials}}, arXiv e-prints,  (2019),
  arXiv:1908.06585, \url{https://arxiv.org/abs/1908.06585}.

\bibitem{HaHa2002}
{\sc A.~Hansbo and P.~Hansbo}, {\em An unfitted finite element method, based on
  {N}itsche's method, for elliptic interface problems}, Comput. Methods Appl.
  Mech. Engrg., 191 (2002), pp.~5537--5552.

\bibitem{HaHa2004}
{\sc A.~Hansbo and P.~Hansbo}, {\em A finite element method for the simulation
  of strong and weak discontinuities in solid mechanics}, Comput. Methods Appl.
  Mech. Engrg., 193 (2004), pp.~3523--3540.

\bibitem{HaLL2017}
{\sc P.~Hansbo, M.~G. Larson, and K.~Larsson}, {\em Cut finite element methods
  for linear elasticity problems}, in Geometrically unfitted finite element
  methods and applications, vol.~121 of Lect. Notes Comput. Sci. Eng.,
  Springer, Cham, 2017, pp.~25--63.

\bibitem{HuOs2020}
{\sc R.~Hu and C.~Oskay}, {\em Spectral variational multiscale model for
  transient dynamics of phononic crystals and acoustic metamaterials}, Comput.
  Methods Appl. Mech. Engrg., 359 (2020), pp.~112761, 26.

\bibitem{JuSt2009}
{\sc M.~Juntunen and R.~Stenberg}, {\em Nitsche's method for general boundary
  conditions}, Math. Comp., 78 (2009), pp.~1353--1374.

\bibitem{KaEc1999}
{\sc M.~Kafesaki and E.~N. Economou}, {\em Multiple-scattering theory for
  three-dimensional periodic acoustic composites}, Phys. Rev. B, 60 (1999),
  pp.~11993--12001.

\bibitem{Kittel2003}
{\sc C.~Kittel}, {\em {Introduction to Solid State Physics}}, John Wiley \&
  Sons, Inc., New York, 8th~ed., 2004.

\bibitem{KHDD1993}
{\sc M.~S. Kushwaha, P.~Halevi, L.~Dobrzynski, and B.~Djafari-Rouhani}, {\em
  Acoustic band structure of periodic elastic composites}, Phys. Rev. Lett., 71
  (1993), pp.~2022--2025.

\bibitem{LHWL2017}
{\sc E.~Li, Z.~C. He, G.~Wang, and G.~R. Liu}, {\em An ultra-accurate numerical
  method in the design of liquid phononic crystals with hard inclusion},
  Comput. Mech., 60 (2017), pp.~983--996.

\bibitem{LiCh2019}
{\sc W.~Li and W.~Chen}, {\em Simulation of the band structure for scalar waves
  in 2{D} phononic crystals by the singular boundary method}, Eng. Anal. Bound.
  Elem., 101 (2019), pp.~17--26.

\bibitem{ScZh1990}
{\sc L.~R. Scott and S.~Zhang}, {\em Finite element interpolation of nonsmooth
  functions satisfying boundary conditions}, Math. Comp., 54 (1990),
  pp.~483--493.

\bibitem{SiSo1995}
{\sc M.~M. Sigalas and C.~M. Soukoulis}, {\em Elastic-wave propagation through
  disordered and/or absorptive layered systems}, Phys. Rev. B, 51 (1995),
  pp.~2780--2789.

\bibitem{SiJe2003}
{\sc O.~Sigmund and J.~Jensen}, {\em Systematic design of phononic
  band\&\#x2013;gap materials and structures by topology optimization},
  Philosophical Transactions of the Royal Society of London. Series A:
  Mathematical, Physical and Engineering Sciences, 361 (2003), pp.~1001--1019.

\bibitem{StLuK2020}
{\sc S.~Sticko, G.~Ludvigsson, and G.~Kreiss}, {\em {High-order cut finite
  elements for the elastic wave equation}}, Advances in Computational
  Mathematics, 46 (2020), p.~45.

\bibitem{VGGZ2019}
{\sc C.~Valencia, J.~Gomez, and N.~Guar\'in-Zapata}, {\em A general-purpose
  element-based approach to compute dispersion relations in periodic materials
  with existing finite element codes}, Journal of Theoretical and Computational
  Acoustics, 27 (2019), p.~1950005.

\bibitem{VeBM2013}
{\sc I.~A. Veres, T.~Berer, and O.~Matsuda}, {\em Complex band structures of
  two dimensional phononic crystals: Analysis by the finite element method},
  Journal of Applied Physics, 114 (2013), p.~083519.

\bibitem{WZLS2019}
{\sc L.~Wang, H.~Zheng, X.~Lu, and L.~Shi}, {\em A {P}etrov-{G}alerkin finite
  element interface method for interface problems with {B}loch-periodic
  boundary conditions and its application in phononic crystals}, J. Comput.
  Phys., 393 (2019), pp.~117--138.

\bibitem{WuXi2019}
{\sc H.~Wu and Y.~Xiao}, {\em An unfitted {$hp$}-interface penalty finite
  element method for elliptic interface problems}, J. Comput. Math., 37 (2019),
  pp.~316--339.

\bibitem{Xu1982}
{\sc J.~Xu}, {\em Error estimates of the finite element method for the 2nd
  order elliptic equations with discontinuous coefficients}, J. Xiangtan Univ.,
  1 (1982), pp.~1--5.

\bibitem{ZZWS2016}
{\sc H.~Zheng, C.~Zhang, Y.~Wang, J.~Sladek, and V.~Sladek}, {\em Band
  structure computation of in-plane elastic waves in 2{D} phononic crystals by
  a meshfree local {RBF} collocation method}, Eng. Anal. Bound. Elem., 66
  (2016), pp.~77--90.

\end{thebibliography}
\end{document}